\newif\ifdraft
\DeclarePairedDelimiter\floor{\lfloor}{\rfloor}
\newtheorem{theorem}{Theorem}
\newtheorem{assumptions}{Assumptions}
\newtheorem{lemma}[theorem]{Lemma}
\newtheorem{proposition}{Proposition}
\theoremstyle{definition}
\newtheorem{definition}{Definition}[section]
\newcommand{\R}{\mathbb{R}}
\renewcommand{\P}{\mathbb{P}}
\newcommand{\E}{\mathbb{E}}
\newcommand{\e}{\mathrm{e}}
\newcommand{\rmd}{\mathrm{d}}
\newcommand{\Cdop}{C_\mathrm{dop}}
\newcommand{\DSi}{D_\mathrm{Si}}
\newcommand{\Dox}{D_\mathrm{ox}}
\newcommand{\Dliq}{D_\mathrm{liq}}
\newcommand{\unit}[2]{#1\,\mathrm{#2}}
\DeclareMathOperator{\RMSE}{RMSE}
\DeclareMathOperator{\EMC}{E_{MC}}
\DeclareMathOperator{\EMLMC}{E_{MLMC}}
\newcommand{\todo}[1]{}
\newcommand{\RN}[1]{%
  \textup{\uppercase\expandafter{\romannumeral#1}}%
}
\begin{document}

\begin{frontmatter}


\title{The optimal multilevel Monte-Carlo approximation of the
  stochastic drift-diffusion-Poisson system }
\author[tuwien]{Leila Taghizadeh\corref{cor1}}
\ead{Leila.Taghizadeh@TUWien.ac.at}
\author[tuwien]{Amirreza Khodadadian}
\ead{Amirreza.Khodadadian@TUWien.ac.at}
\author[tuwien,asu]{Clemens Heitzinger}
\ead{Clemens.Heitzinger@TUWien.ac.at}
\cortext[cor1]{Corresponding author}
 
\address[tuwien]{Institute for Analysis and Scientific Computing, 
  Vienna University of Technology (TU Wien),
  Wiedner Hauptstraße 8--10, 
  1040 Vienna, Austria}
\address[asu]{School of Mathematical and Statistical Sciences,
  Arizona State University, Tempe, AZ 85287, USA}

\date{}

\begin{abstract}
 Existence and local-uniqueness theorems for weak solutions of a
   system consisting of the drift-diffusion-Poisson equations and the
   Poisson-Boltzmann equation, all with stochastic coefficients, are
   presented.  For the numerical approximation of the expected value of
   the solution of the system, we develop a multi-level Monte-Carlo
   (MLMC) finite-element method (FEM) and we analyze its rate of
   convergence and its computational complexity.  This allows to find
   the optimal choice of discretization parameters.  Finally, numerical
   results show the efficiency of the method.  Applications are, among
   others, noise and fluctuations in nanoscale transistors, in
   field-effect bio- and gas sensors, and in nanopores.
\end{abstract}

\begin{keyword}
Stochastic drift-diffusion-Poisson system, existence and uniqueness,
  multi-level Monte-Carlo finite-element method, optimal method.
\end{keyword}

\end{frontmatter}

\section{Introduction}\label{s:intro}

In this work, we consider the system consisting of the
drift-diffusion-Poisson equations coupled with the Poisson-Boltzmann
equation, all with random coefficients.  We show existence and local
uniqueness of weak solutions for the stationary problem.  This system
is a general model for transport processes, where a stochastic process
determines the coefficients.  Furthermore, we develop a multi-level
(ML) Monte-Carlo (MC) finite-element method (FEM) for the system of
equations.  The different types of errors in the numerical
approximation must be balanced and the optimal approach is found here.

In the system of equations considered here, both the operators and the
forcing terms are stochastic, and therefore this system has numerous
applications (see Figure \ref{f:schematic}).  A deterministic and
simplified version, without the Poisson-Boltzmann equation, is the
standard model for semiconductor devices.  Nowadays, randomness due to
the location of impurity atoms is the most important effect limiting
the design of integrated circuits.  This application area is included
in the present model equations.  Furthermore, the full system of
equations considered here describes a very general class of
field-effect sensors including their most recent incarnation, nanowire
bio- and gas sensors.  While previous mathematical modeling has
focused on the deterministic problem and stochastic surface reactions
\cite{heitzinger2010multiscale, baumgartner2012existence,
  Baumgartner2013one-level, Baumgartner2013predictive,
  Heitzinger2014multiscale, Tulzer2015fluctuations}, the present model
describes how various stochastic processes propagate through a PDE
model and result in noise and fluctuations in a transport model.
Quantifying noise and fluctuations in sensors is important, since they
determine the detection limit and the signal-to-noise ratio.  Noise
and fluctuations are of great importance especially in nanometer-scale
devices, as any random effect becomes proportionally more important as
devices are shrunk.

Various sources of noise and fluctuations are included in the model
equations here.  Doping of semiconductor devices is inherently random
and results in a random number of impurity atoms placed at random
positions, each one changing the charge concentration and the mobility
at its location.  In field-effect sensors, target molecules bind to
randomly placed probe molecules in a stochastic process, so that the
detection mechanism is inherently stochastic.  The Brownian motion of
the target molecules also results in changes in charge concentration
and permittivity.  This randomness at the sensor surface propagates
through the self-consistent transport equations and finally results in
noise in the sensor output.

In summary, there are many applications where both the operators and
the forcing terms in the drift-diffusion-Poisson system are random.
The probability distributions of permittivities and charge
concentrations can be calculated from physical models
\cite{heitzinger2010calculation}.

\begin{figure}[t!]
  \centering
  \includegraphics[width=0.75\linewidth]{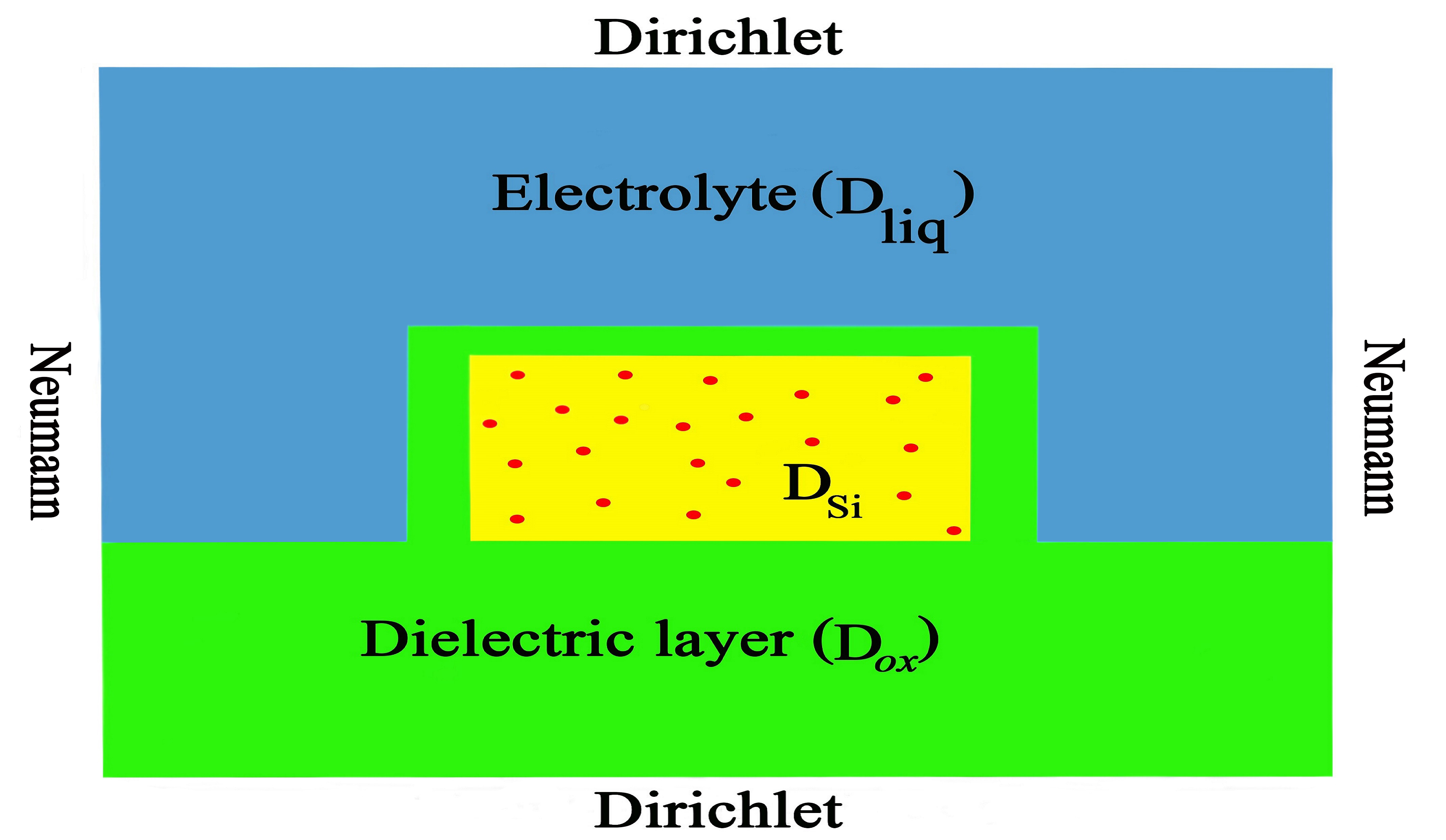}
  \caption{Schematic diagram showing leading applications.  In a
    field-effect transistor, the dopant atoms are randomly distributed
    (there is no electrolyte).  In field-effect sensors, there are
    randomly distributed dopant atoms as well as randomly distributed
    charged molecules in the electrolyte.}
  \label{f:schematic}
\end{figure}

In many realistic situations, the probability space is
high-dimensional.  For example, each probe molecule, each target
molecule, and each probe-target complex needs to be modeled in
sensors.  In transistors, the number impurities and their positions
are random.  The large number of dimensions favors the use of
Monte-Carlo (MC) methods: It is well-known that the convergence rate
of standard MC methods is independent of the number of dimensions.  On
the other hand, it is inversely proportional to the square root of the
number of evaluations and here each evaluation requires solving a two-
or three-dimensional system of elliptic equations.


These considerations motivate the development of a multi-level
Monte-Carlo (MLMC) algorithm.  In \cite{giles2008multilevel}, after
earlier work \cite{heinrich2001multilevel} on numerical quadrature, it
was shown that a multi-level approach and a geometric sequence of
timesteps can reduce the order of computational complexity of MC path
simulations for estimating the expected value of the solution of a
stochastic ordinary differential equation.  This is done by reducing
the variance and leaving the bias unchanged due to the Euler
discretization used as the ODE solver. In \cite{giles2008improved},
the Milstein scheme was used as the ODE solver to improve the
convergence rate of the MLMC method for scalar stochastic ordinary
differential equations and the method was made more efficient.  The
new method has the same weak order of convergence, but an improved
first-order strong convergence, and it is the strong order of
convergence which is central to the efficiency of MLMC methods. In
\cite{giles2009multilevel}, the MLMC method was combined with
quasi-Monte-Carlo (QMC) integration using a randomized rank-1 lattice
rule and the asymptotic order of convergence of MLMC was improved and
a lower computational cost was achieved as well.

In \cite{barth2011multi}, an MLMC finite-element method was presented
for elliptic partial differential equations with stochastic
coefficients. In this problem, the source of randomness lies in the
coefficients inside the operator and the coefficient fields are
bounded uniformly from above and away from zero.  The MLMC error and
work estimates were given for the expected values of the solutions and
for higher moments.  Also, in \cite{cliffe2011multilevel}, the same
problem was considered and numerical results indicate that the MLMC
estimator is not limited to smooth problems.  In \cite{kuo2012quasi},
a multi-level quasi-Monte-Carlo finite-element method for a class of
elliptic PDEs with random coefficients was presented.  The error
analysis of QMC was generalized to a multi-level scheme with the
number of QMC points dependent on the discretization level and with a
level-dependent dimension truncation strategy.

In \cite{charrier2013finite}, uniform bounds on the finite-element
error were shown in standard Bochner spaces. These new bounds can be
used to perform a rigorous analysis of the MLMC method for elliptic
problems, and a rigorous bound on the MLMC complexity in a more
general case was found.  In \cite{teckentrup2013further}, the
finite-element error analysis was extended for the same type of
equations posed on non-smooth domains and with discontinuities in the
coefficient. In \cite{haji2015optimization}, a general optimization of
the parameters in the MLMC discretization hierarchy based on uniform
discretization methods with general approximation orders and
computational costs was developed. In current work, we define a global
optimization problem which minimizes the computational complexity such
that the error bound is less or equal to a given tolerance level.

The rest of this paper is organized as follows. In
Section~\ref{s:model}, we present the system of model equations with
stochastic coefficients in detail.  In Section~\ref{s:existence}, we
define weak solutions of the model equations and prove existence and
local-uniqueness theorems.  Section~\ref{s:GFEM} collects results
about the FEM for later use.  In Section~\ref{s:MLMCFEM}, we introduce
a multi-level Monte-Carlo finite-element method for the system and
analyze its rate of convergence.  In Section~\ref{s:complexity}, we
discuss the computational complexity and find the optimal MLMC
method. In Section~\ref{s:implementation}, we present numerical
results for random impurity atoms in nanowire field-effect
sensors. The MLMC-FEM method is illustrated there and the
computational costs of various numerical techniques are compared as
well. Finally, conclusions are drawn in Section~\ref{conclusions}.
 
\section{The Stochastic Model Equations}\label{s:model}

Suppose that the domain $D \subset \R^d$ is bounded and convex, and
that $d \le 3$.  The whole domain~$D$ is partitioned into three
subdomains with different physical properties and hence different
model equations in order to include a large range of applications.
The first subdomain $\DSi$ consists of the (silicon) nanowire and acts
as the transducer of the sensor; in this subdomain, the
drift-diffusion-Poisson system describes charge transport. The
semiconductor is surrounded by a dielectric layer (usually an oxide)
which comprises the second subdomain $\Dox$, where just the Poisson
equation holds.  Finally, the third subdomain $\Dliq$ is the aqueous
solution containing cations and anions and the Poisson-Boltzmann
equation holds.  Also, the boundary layer at the sensor surface is
responsible for the recognition of the target molecules.  In the case
of field-effect sensors, solving a homogenization problem gives rise
to two interface conditions for the Poisson equation
\cite{heitzinger2010multiscale}.  In summary, the domain is
partitioned into
\begin{equation*}
  D = \DSi \cup \Dox \cup \Dliq. 
\end{equation*}
 
In the subdomain $\DSi$, the stationary drift-diffusion-Poisson system
\begin{subequations}
  \label{equation1}
  \begin{align}
    -\nabla \cdot(A(x,\omega)\nabla  V(x,\omega))
    &=q(\Cdop(x,\omega)+p(x,\omega)-n(x,\omega)),\\ 
    \nabla\cdot J_n(x,\omega) &= qR(n(x,\omega),p(x,\omega)),\\ 
    \nabla \cdot J_p(x,\omega) &= -qR(n(x,\omega),p(x,\omega)),\\ 
    J_n(x,\omega) &= q(D_n\nabla n(x,\omega)-\mu_nn(x,\omega)\nabla V(x,\omega)),\\ 
    J_p(x,\omega) &= q(-D_p\nabla p(x,\omega)-\mu_pp(x,\omega)\nabla V(x,\omega))
  \end{align}
\end{subequations}
models charge transport, where $A(x,\omega)$, the permittivity, is a
random field with $x \in \R^d$ and a random parameter
$\omega\in\Omega$ in a probability space $(\Omega, \mathbb{A},
\mathbb{P})$.  $\Omega$ denotes the set of elementary events, i.e.,
the sample space, $\mathbb{A}$ the $\sigma$-algebra of all possible
events, and $\mathbb{P}\colon \mathbb{A} \to [0,1]$ is a probability
measure.  $V(x,\omega)$ is the electrostatic potential and $q>0$ is
the elementary charge, $\Cdop(x,\omega)$ is the doping concentration,
$n(x,\omega)$ and $p(x,\omega)$ are the concentrations of electrons
and holes, respectively, $J_n(x,\omega)$ and $J_p(x,\omega)$ are the
current densities, $D_n$ and $D_p$ are the diffusion coefficients,
$\mu_n$ and $\mu_p$ are the mobilities, and
$R(n(x,\omega),p(x,\omega))$ is the recombination rate. We use the
Shockley-Read-Hall recombination rate
\begin{equation*}
  R(n(x,\omega),p(x,\omega))
  := \frac{n(x,\omega)p(x,\omega)-n_i^2}
  {\tau_p(n(x,\omega)+n_i)+\tau_n(p(x,\omega)+n_i)}
\end{equation*}
here, where the constant $n_i$ is the intrinsic charge density and
$\tau_n$ and $\tau_p$ are the lifetimes of the free carriers, although
the mathematical results here hold for many expressions for the
recombination rate.  Equations \eqref{equation1} include the
convection terms $-n\nabla V$ and $-p\nabla V$, which prohibit the use
of the maximum principle in a simple way.

We assume that the Einstein relations $D_n = U_T \mu_n$ and
$D_p = U_T \mu_p$ hold, where the constant $U_T$ is the thermal
voltage. Therefore, it is beneficial to change variables from the
concentrations~$n$ and~$p$ to the Slotboom variables~$u$ and~$v$
defined by
\begin{align*}
  n(x,\omega) &=: n_i \e^{V(x,\omega)/U_T}u(x,\omega),\\
  p(x,\omega) &=: n_i \e^{-V(x,\omega)/U_T}v(x,\omega).
\end{align*}
The system \eqref{equation1} then becomes
\begin{align*}
  -\nabla \cdot(A\nabla  V(x,\omega))
  &=qn_i(e^{-V(x,\omega)/U_T}v(x,\omega)-e^{V(x,\omega)/U_T}u(x,\omega))
    + q\Cdop(x,\omega),\\
  U_T\nabla\cdot(\mu_n e^{V(x,\omega)/U_T}\nabla u(x,\omega))
  &=\frac{u(x,\omega)v(x,\omega)-1}
    {\tau_p(e^{V(x,\omega)/U_T}u(x,\omega)+1)+\tau_n(e^{-V(x,\omega)/U_T}v(x,\omega)+1)},\\
  U_T\nabla\cdot(\mu_p e^{-V(x,\omega)/U_T}\nabla v(x,\omega))
  &=\frac{u(x,\omega)v(x,\omega)-1}
    {\tau_p(e^{V(x,\omega)/U_T}u(x,\omega)+1)+\tau_n(e^{-V(x,\omega)/U_T}v(x,\omega)+1)}, 
\end{align*}
where the continuity equations are self-adjoint.

The boundary $\partial D$ is partitioned into Dirichlet and Neumann
boundaries. For the Ohmic contacts we have
\begin{equation*}
  V(x,\omega)|_{\partial D_D}=V_D(x), \quad
  u(x,\omega)|_{\partial D_{\mathrm{Si}, D}}=u_D(x) \quad \text{and} \quad
  v(x,\omega)|_{\partial D_{\mathrm{Si}, D}}=v_D(x).
\end{equation*}
At Ohmic contacts the space charge vanishes, i.e., $\Cdop +p_D-n_D=
0$, and the system is in thermal equilibrium, i.e., $n_D p_D = n_i^2$
on $\partial D_D$. Furthermore, at each contact, the quasi Fermi
potential levels of silicon are aligned with an external applied
voltage $U(x)$. Therefore, by using the quasi Fermi potential, we
determine the boundary condition on $\partial D_{\mathrm{Si},D}$ using
\begin{equation*}
  V_1(x) :=
  U(x)+U_T\ln \left( \frac{n_D(x)}{n_i}\right) =
  U(x)-U_T\ln \left( \frac{p_D(x)}{n_i}\right).
\end{equation*}
The boundary values $u_D(x)$ and $v_D(x)$ are found to be
\begin{align*}
  u_D(x) &:= n_i^{-1}e^{-V_1(x)/U_T}n_D(x),\\
  v_D(x) &:= n_i^{-1}e^{V_1(x)/U_T}p_D(x),
\end{align*}
where
\begin{align*}
  n_D(x) &:= \frac{1}{2} \left(  \Cdop+\sqrt{\Cdop^2+4n_i^2}\right),\\
  p_D(x) &:= \frac{1}{2} \left(- \Cdop+\sqrt{\Cdop^2+4n_i^2}\right)
\end{align*}
hold \cite[Chapter~3]{Markowich1990semiconductor}. Here, $\Cdop :=
N_{D}^+ - N_{A}^-$ is the net doping concentration, where $N_{D}^+$
and $N_{A}^-$ are the donor and acceptor concentrations, respectively.

The zero Neumann boundary conditions
\begin{equation*}
  \mathbf{n} \cdot \nabla V(x,\omega)=0,\quad
  \mathbf{n} \cdot \nabla u(x,\omega)=0,\quad
  \mathbf{n} \cdot \nabla v(x,\omega)=0
\end{equation*}
hold on the rest of the boundary $\partial D$.  Here $\mathbf{n}$
denotes the unit outward normal vector on the boundary.

A jump in the permittivity~$A$ always gives rise to two continuity
conditions: the continuity of the potential and the continuity of the
electric displacement field.  Homogenization of an elliptic problem
with a periodic boundary layer at a manifold~$\Gamma$ yields the two
interface conditions \cite{heitzinger2010multiscale}
\begin{align*}
  V (0+,y,\omega)-V(0-,y,\omega) &= \alpha(y,\omega),\\
  A(0+)\partial_x V(0+,y,\omega)-A(0-)\partial_x V(0-,y,\omega)
                                 &= \gamma(y,\omega)
\end{align*}
between the semiconductor and the liquid.  Here we denote the
one-dimensional coordinate orthogonal to the manifold~$\Gamma$ by~$x$
and the remaining $(d-1)$-dimensional coordinates by~$y$.  $\alpha$
and $\gamma$ are essentially given by the dipole-moment and the
surface-charge densities of the boundary layer; in general, we write
them as the functional $M_\alpha(V)$ and $M_\gamma(V)$ of the
potential~$V$.  They may correspond to the Metropolis Monte-Carlo
method \cite{bulyha2011algorithm}, to solving the nonlinear
Poisson-Boltzmann equation \cite{heitzinger2010calculation}, or to
systems of ordinary differential equations for surface reactions
\cite{fort2006surface, Tulzer2013kinetic}.

In the subdomain $\Dox$, there are no charge carriers and the Poisson
equation is simply
\begin{equation*}
  -\nabla\cdot(A \nabla V(x,\omega))=0. 
\end{equation*}

In the subdomain $\Dliq$, the nonlinear Poisson-Boltzmann equation
\begin{equation*}
  -\nabla\cdot(A(x,\omega)\nabla V(x,\omega))
  + 2\eta\sinh(\beta(V(x,\omega) - \Phi(x,\omega)))=0
\end{equation*}
holds and models screening by free charges.  Here $\eta$ is the ionic
concentration, the constant $\beta$ equals $\beta := q/(k_BT)$ in
terms of the Boltzmann constant $k_B$ and the temperature $T$, and
$\Phi$ is the Fermi level.

In summary, for all $\omega\in \Omega$, the model equations are the
boundary-value problem
\begin{subequations}\label{modeleqn}
  \begin{alignat}{2}
   & -\nabla\cdot(A(x,\omega)\nabla V(x,\omega))\\\nonumber
   & = q \Cdop(x,\omega) - q n_i(\e^{V(x,\omega)/U_T}u(x,\omega)
    - \e^{-V(x,\omega)/U_T}v(x,\omega))
    && \text{in } \DSi,\\
   & -\nabla\cdot(A(x,\omega)\nabla V(x,\omega))
    = 0
    && \text{in } \Dox,\\
   & -\nabla\cdot(A(x,\omega)\nabla V(x,\omega))
    = -2\eta \sinh(\beta(V(x,\omega)-\Phi(x,\omega)))
    && \text{in } \Dliq,\\
   &  V(0+,y,\omega) - V(0-,y,\omega)
    = \alpha(y,\omega)
    && \text{on } \Gamma,\\
   & A(0+)\partial_x V(0+,y,\omega)-A(0-)\partial_x V(0-,y,\omega)
    = \gamma(y,\omega)
    && \text{on } \Gamma,\\
   & U_T\nabla\cdot(\mu_n \e^{V(x,\omega)/U_T}\nabla u(x,\omega))\\\nonumber
   & =\frac{u(x,\omega)v(x,\omega)-1}
    {\tau_p(\e^{V(x,\omega)/U_T}u(x,\omega)+1)
      +\tau_n(\e^{-V(x,\omega)/U_T}v(x,\omega)+1)}
    && \text{in } \DSi,\\
   & U_T\nabla\cdot(\mu_p \e^{-V(x,\omega)/U_T}\nabla v(x,\omega))\\\nonumber
   & =\frac{u(x,\omega)v(x,\omega)-1}
    {\tau_p(\e^{V(x,\omega)/U_T}u(x,\omega)+1)
      +\tau_n(\e^{-V(x,\omega)/U_T}v(x,\omega)+1)}
    && \text{in } \DSi,\\
   & \alpha(y,\omega) = M_{\alpha}(V(y,\omega))
    && \text{in } \Gamma,\\
   & \gamma(y,\omega) =M_{\gamma}(V(y,\omega))
    && \text{in } \Gamma,\\
   & V(x,\omega) = V_D(x)
    && \text{on } \partial D_D,\\
   & \mathbf{n} \cdot \nabla V(x,\omega) =0
    && \text{on } \partial D_N,\\
   & u(x,\omega) = u_D(x),\quad v(x,\omega) = v_D(x)
    && \text{on } \partial D_{D,\mathrm{Si}},\\
   & \mathbf{n} \cdot \nabla u(x,\omega)=0,\quad
    \mathbf{n} \cdot \nabla v(x,\omega)=0
    && \text{on }\partial D_{N,\mathrm{Si}}.
  \end{alignat}
\end{subequations}

\section{Existence and Local Uniqueness}
\label{s:existence}

In order to state the main theoretical results, we first record the
assumptions on the data of the system (\ref{modeleqn}).  The
assumptions are moderate in the sense that similar ones are necessary
for the deterministic system of equations.  Then weak solutions and
Bochner spaces are defined.  Using the assumptions and definitions,
existence and local uniqueness are shown.

\subsection{Assumptions}

The following assumptions are required.

\begin{assumptions}\label{assump1}
  \begin{enumerate}
  \item The bounded domain $D \subset \R^3$ has a $C^2$ Dirichlet
    boundary $\partial D_D$, the Neumann boundary $\partial D_N$
    consists of $C^2$ segments, and the Lebesgue measure of the
    Dirichlet boundary $\partial D_D$ is nonzero.  The $C^2$ manifold
    $\Gamma \subset D$ splits the domain~$D$ into two nonempty domains
    $D^+$ and $D^-$ so that
    $\mathop{meas}(\Gamma \cap \partial D) = 0$ and
    $\Gamma \cap \partial D \subset \partial D_N$ hold.

  \item $(\Omega, \mathbb{A}, \mathbb{P})$ is a probability space,
    where $\Omega$ denotes the set of elementary events (sample
    space), $\mathbb{A}$ the $\sigma$-algebra of all possible events,
    and $\mathbb{P}\colon \mathbb{A} \to [0,1]$ is a probability
    measure.

  \item The diffusion coefficient $A(x,\omega)$ is assumed to be a
    strongly measurable mapping from $\Omega$ into $L^\infty(D)$. It
    is uniformly elliptic and bounded function of position $x \in D$
    and the elementary event $\omega \in \Omega$, i.e., there exist
    constants $0 < A^- < A^+ < \infty$ such that
    \begin{equation*}
      0 < A^- \le
      \operatorname{ess\,inf}_{x \in D} A(x,\omega)
      \leq \| A(\cdot,\omega) \|_{L^\infty(D)}
      \leq A^+ < \infty
      \quad\forall\omega\in\Omega.
    \end{equation*}
    Furthermore, $A(x,\omega)|_{D^+ \times \Omega} \in C^1(D^+\times
    \Omega, \R^{3\times3})$ and $A(x,\omega)|_{D^-\times\Omega} \in
    C^1(D^-\times\Omega, \R^{3\times3})$.
    \label{assump A}

  \item The doping concentration $\Cdop(x,\omega)$ is bounded above
    and below with the bounds
    \begin{equation*}
      \underline{C}
      := \inf_{x \in D} \Cdop(x,\omega)
      \le \Cdop(x,\omega)\leq \sup_{x \in D} \Cdop(x,\omega)
      =: \overline{C} \quad \forall \omega \in \Omega.
    \end{equation*}


  \item There is a constant $\R \ni K \ge 1$ satisfying
    \begin{equation*}
      \frac{1}{K}\leq u_D(x),v_D(x) \le K
      \quad \forall x \in \partial D_{\mathrm{Si},D}.
    \end{equation*}

  \item The functionals
    $M_\alpha:  L^2(\Omega; H^1(D)) \cap L^\infty(D \times \Omega) \to
    L^2(\Omega; H^{1/2}(\Gamma)) \cap L^\infty(\Gamma \times \Omega)$
    and
    $M_\gamma:  L^2(\Omega; H^1(D)) \cap L^\infty(D \times \Omega) \to
    L^\infty(\Gamma \times \Omega)$ are continuous.
    \label{ass6}
    
  \item The mobilities $\mu_n(x,\omega)$ and $\mu_p(x,\omega)$ are
    uniformly bounded functions of $x \in D$ and $\omega \in \Omega$,
    i.e.,
    \begin{alignat*}{2}
      0<\mu_n^- &\leq \mu_n(x,\omega)\leq \mu_n^+<\infty
      &\qquad&\forall x \in D,\quad\forall\omega\in\Omega,\\
      0<\mu_p^- &\leq \mu_p(x,\omega)\leq \mu_p^+<\infty
      &\qquad&\forall x \in D,\quad\forall\omega\in\Omega,
    \end{alignat*}
    where $\mu_p(x,\omega), \mu_n(x,\omega) \in C^1(\DSi\times\Omega,
    \R^{3\times3})$.
    \label{ass7}
    \label{assump mu}
    
    Furthermore, the inclusions $f(x,\omega)\in L^2(\Omega;L^2(D))
    \cap L^{\infty} (D\times\Omega)$, $V_D(x) \in H^{1/2}(\partial D )
    \cap L^{\infty} (\Gamma)$, $u_D,v_D(x) \in H^{1/2}(\partial
    \DSi)$, $\alpha(x,\omega)\in L^2(\Omega; H^{1/2}(\Gamma))$, and
    $\gamma(x,\omega)\in L^2(\Omega; L^2(\Gamma))$ hold.
  \end{enumerate}
\end{assumptions}

Assumptions \ref{assump A} and \ref{assump mu} guarantee the uniform
ellipticity of the Poisson and the continuity equations, respectively.


\subsection{Weak Solution of the Model Equations}
\label{s:weaksol}

In order to define the weak formulation of the stochastic
boundary-value problem \eqref{modeleqn}, it suffices to consider the
semilinear boundary-value problem
\begin{subequations}\label{simplemodel}
  \begin{alignat}{3}\label{simplemodela}
    -\nabla\cdot(A(x,\omega)\nabla w(x,\omega))+h(x,w(x,\omega))
    &= f(x,\omega)
    &\quad&\forall x \in D \setminus \Gamma
    &\quad&\forall\omega \in \Omega,\\
    w(x,\omega)&=w_D(x)
    &\quad&\forall x \in \partial D_D
    &\quad&\forall\omega \in \Omega,\\
    \mathbf{n} \cdot \nabla w(x,\omega)&=0
    &\quad&\forall x \in \partial D_N
    &\quad&\forall \omega \in \Omega,\\
    w(0+,y,\omega) - w(0-,y,\omega) &= \alpha(y,\omega)
    &\quad&\forall x \in \Gamma
    &\quad&\forall \omega \in \Omega,\\
    A(0+)\partial_x w(0+,y,\omega)-A(0-)\partial_x w(0-,y,\omega)
    &= \gamma(y,\omega)
    &\quad&\forall x \in \Gamma
    &\quad&\forall\omega \in \Omega,
  \end{alignat}
\end{subequations}
which is a semilinear Poisson equation with interface conditions.  The
coefficient~$A$ here is either equal to~$A$ or equal to $\mu_n
\e^{V/U_T}$ or $\mu_p \e^{-V/U_T}$ in \eqref{modeleqn}.  However,
uniform ellipticity holds in each of these cases per
Assumption~\ref{assump1}.

For the weak formulation, we define the Hilbert space
\begin{equation}\label{sol space}
  X := H^1_g(D) = \left\{ w \in H^1(D) \mid Tw = g \right\}
\end{equation}
as the solution space, where $T$ is the trace operator defined such
that $Tw=g$, where $g$ is Dirichlet lift of $w_D:=w|_{\partial D_D}$.
The operator~$T$ is well-defined and continuous from $H^1(D)$ onto
$H^{1/2}(\partial D)$ for the Lipschitz domain~$D$.  For $g=0$, we
define the test space
\begin{equation}\label{test space}
  X_0 := H^1_0(D) = \left\{w \in H^1(D) \mid Tw=0\right\}.
\end{equation}


\begin{definition}[Bochner spaces]\label{def0}
  Given a Banach space $(X, \|\cdot\|_X)$ and $1 \le p \leq +\infty$,
  the \emph{Bochner space} $L^p(\Omega;X)$ is defined to be the space
  of all measurable functions $w\colon \Omega \to X$ such that for
  every $\omega\in \Omega$ the norm
  \begin{align}\label{bochner norm}
    \| w \|_{L^p(\Omega;X)} :=
    \begin{cases}
      \Big( \int_\Omega \| w(\cdot,\omega) \|_X^p \rmd\P(\omega)
      \Big)^{1/p}
      = \E\Big[\| w(\cdot,\omega) \|_X^p \Big]^{1/p}
      < \infty, & 1 \leq p < \infty,\\
      \operatorname*{ess\,sup}_{\omega \in \Omega}
      \| w(\cdot,\omega) \|_X < \infty, & p = \infty
    \end{cases}
  \end{align}
  is finite.
\end{definition}

To derive the variational formulation of our model
\eqref{simplemodel}, we fix the event $\omega \in \Omega$ at first,
multiply ({\ref{simplemodela}}) by a test function $\phi \in L^2(
\Omega; X_0)$, and integrate by parts in~$D$ to obtain the relation
\begin{equation*}
  \int_D A\nabla w\cdot \nabla \phi+ \int_D h(w)\phi
  = \int_Df\phi + \int_{\Gamma} \gamma \phi
  \qquad\forall \phi \in L^2(\Omega;X_0).
\end{equation*}

\begin{definition}[Weak solution on $D \times \Omega$]\label{def2}
  Suppose that $A$ satisfies Assumptions~\ref{assump1} and that
  $f(x,\omega) \in L^2(\Omega; L^2(D))$, $w_D(x) \in H^{1/2}(\partial
  D_D)$, and $\gamma(x,\omega) \in L^2(\Omega;L^2(\Gamma))$ holds. A
  function~$w \in L^2(\Omega; X)$ is called a \emph{weak solution} of
  the boundary-value problem ({\ref{simplemodel}}), if it satisfies
  \begin{equation}\label{weaksol}
    a(w,\phi)=\ell(\phi)\qquad\forall \phi \in L^2(\Omega;X_0),
  \end{equation}
  where $a\colon L^2(\Omega;X) \times L^2(\Omega;X_0)\to \R$ and
  $\ell\colon L^2(\Omega; X_0)\to \R$ are defined by
  \begin{equation*}
    a(w,\phi)
    := \E\left[ \int_D A\nabla w\cdot\nabla \phi\rmd x\right]
    + \E\left[\int_D
      h\left(w\right)
      \phi \rmd x \right]
  \end{equation*}
  and
  \begin{equation*}
    \ell(\phi) := \E\left[ \int_D f \phi \rmd x\right] +
    \E\left[\int_\Gamma \gamma \phi \rmd x \right].
  \end{equation*}
\end{definition}

\subsection{Existence and Local Uniqueness of the Solution}
\label{s:main existence}

In the next step, we prove existence and local uniqueness of solutions
of system of stochastic elliptic boundary-value problems with
interface conditions \eqref{modeleqn} using the Schauder fixed-point
theorem and the implicit-function theorem similarly to
\cite[Theorem 2.2 and 5.2]{baumgartner2012existence}.

\begin{theorem}[Existence]\label{existence}
  Under Assumptions~\ref{assump1}, for every $f(x,\omega) \in
  L^2(\Omega; L^2(D))$ and $V_D, u_D, v_D\in H^{1/2}(\partial D)$,
  there exists a weak solution
  \begin{multline*}
    (V(x,\omega),u(x,\omega),v(x,\omega),\alpha(x,\omega),
    \gamma(x,\omega))
    \in \big( L^2(\Omega;H^1_{V_D}(D) \cap L^\infty(D\times\Omega)\big)\\
    {} \times \big( L^2(\Omega;H^1_{u_D}(\DSi))
    \cap L^\infty(\DSi\times\Omega)\big)
    \times \big(L^2(\Omega;H^1_{v_D}(\DSi))
    \cap L^\infty(\DSi\times\Omega)\big)\\
    \times \big(L^2(\Omega;H^1(\Gamma))\cap L^\infty(\Gamma\times\Omega)\big)^2
  \end{multline*}
  of the stochastic boundary-value problem (\ref{modeleqn}), and for
  every $\omega\in \Omega$ it satisfies the $L^\infty$-estimate
  \begin{alignat*}{2}
    \underline{V}&\leq V(x,\omega)\leq \overline{V} &\qquad&\text{in } D,\\
    \frac{1}{K}  &\leq u(x,\omega)\leq K            &      &\text{in } \DSi,\\
    \frac{1}{K}  &\leq v(x,\omega)\leq K            &      &\text{in } \DSi,
  \end{alignat*}
  where
  \begin{align*}
    \underline{V}&:=\min(\inf_{\partial D_D}V_D, \Phi-\sup_{D}V_L,
                   U_T\ln(\frac{1}{2Kn_i}
                   (\underline{C}+\sqrt{\underline{C}^2 + 4n_i^2}))-\sup_{D}V_L),\\
    \overline{V} &:=\max(\sup_{\partial D_D}V_D, \Phi-\inf_{D}V_L,
                   U_T\ln(\frac{K}{2n_i}
                   (\overline{C}+\sqrt{\overline{C}^2 + 4n_i^2}))-\inf_{D}V_L).
  \end{align*}
  Here $V_L(x,\omega)$ is the solution of the linear problem (i.e.,
  problem \eqref{simplemodel} with $h\equiv0$), for which the estimate
  \begin{equation*}
    \|  V_L \|_{ L^2(\Omega;H^1_{V_D}(D))}
    \leq C \left( \|f \|_{L^2(\Omega; L^2(D))}
      +\| V_D \|_{ H^{1/2}({\partial D_D})}
      +\| \alpha \|_{ L^2(\Omega; H^{1/2}(\Gamma))}\\
      +\| \gamma \|_{ L^2(\Omega; L^{2}(\Gamma))} \right)
  \end{equation*}
  holds, where $C$ is a positive constant. 
\end{theorem}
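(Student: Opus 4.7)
The plan is to construct a solution pathwise for each fixed $\omega\in\Omega$ by a Schauder fixed-point argument and then lift it to the Bochner setting using the fact that the pathwise a priori bounds are uniform in~$\omega$. This mirrors the deterministic strategy of \cite[Theorems~2.2 and~5.2]{baumgartner2012existence}, with extra care for the nonlinear interface functionals $M_\alpha,M_\gamma$ and for measurability in~$\omega$.

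For fixed $\omega$ I would work on the closed, convex, bounded set
\[
  \mathcal{K}_\omega := \bigl\{(V,u,v)\in L^\infty(D)\times L^\infty(\DSi)^2 : \underline V\le V\le\overline V,\ 1/K\le u,v\le K\bigr\}
\]
and define a solution map $T_\omega\colon\mathcal{K}_\omega\to\mathcal{K}_\omega$ in two substeps. Given $(V_*,u_*,v_*)\in\mathcal{K}_\omega$, first set $\alpha:=M_\alpha(V_*)$ and $\gamma:=M_\gamma(V_*)$ (which lie in $L^\infty(\Gamma)$ by Assumption~\ref{ass6}), freeze $u_*,v_*$ on the right-hand side of the Poisson equation, and solve the resulting semilinear interface problem of the type \eqref{simplemodel} for a new~$V$. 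The nonlinearity is strictly monotone in~$V$ (positivity of $u_*,v_*$ makes $qn_i(\e^{V/U_T}u_*-\e^{-V/U_T}v_*)$ increasing in~$V$ on $\DSi$, and $\sinh(\beta(V-\Phi))$ is increasing on~$\Dliq$), so together with Assumption~\ref{assump A} the theory of monotone operators yields a unique weak solution in $H^1_{V_D}(D)$. The $L^\infty$-box $[\underline V,\overline V]$ is then enforced by constant sub- and super-solutions built from the linear solution $V_L$ and the boundary data, which is precisely where the expressions in the statement originate. In the second substep I freeze this $V$ in the two self-adjoint continuity equations; their coefficients $\mu_n\e^{V/U_T}$ and $\mu_p\e^{-V/U_T}$ are uniformly elliptic thanks to the $L^\infty$ bound on $V$ and Assumption~\ref{assump mu}. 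Solving the two linear elliptic problems and applying Stampacchia truncation against $1/K$ and $K$ together with the boundary bounds on $u_D,v_D$ returns $(u,v)$ inside the prescribed range.

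To apply Schauder's theorem I would verify continuity of $T_\omega$ in the $L^2$-topology via standard elliptic energy estimates (using continuity of $M_\alpha,M_\gamma$ from Assumption~\ref{ass6}) and compactness via the Rellich embedding $H^1(D)\hookrightarrow L^2(D)$ and the compact trace $H^1(D)\to L^2(\Gamma)$. The resulting fixed point is a pathwise weak solution whose $L^\infty$ bounds are $\omega$-independent. For the stochastic assembly, strong measurability in~$\omega$ of $A,\Cdop,\mu_n,\mu_p,f,\Phi$ and continuous dependence of the linear solution operators on the data produce, via a Castaing-type representation of the fixed-point set, a strongly measurable selection $\omega\mapsto(V,u,v)(\cdot,\omega)$. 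Integrating the pathwise bounds against $\P$ yields the claimed Bochner-space regularity; the displayed a priori estimate for $V_L$ itself is obtained by testing the linear Poisson problem against the Dirichlet lift of $V_D$ and using the trace inequality for the $\alpha$- and $\gamma$-terms.

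The main obstacle is twofold. First, the interface conditions on $\Gamma$ are not frozen data but implicit functionals of~$V$, so the fixed-point map has to fold $\alpha=M_\alpha(V)$ and $\gamma=M_\gamma(V)$ into the Poisson step; regaining continuity and compactness of the composite map requires combining the compact trace with the continuity of $M_\alpha,M_\gamma$ supplied by Assumption~\ref{ass6}. Second, pinning down the explicit constants $\underline V,\overline V$ needs a careful sub-/super-solution construction that handles three regimes separately---the exponential Slotboom term dominates in $\DSi$, the $\sinh$ term in $\Dliq$, and the jump $\alpha$ has to be absorbed across $\Gamma$---which is precisely why the definitions of $\underline V$ and $\overline V$ in the statement take the form of a minimum and maximum, respectively, over three candidate bounds. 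All other steps are routine given the uniform ellipticity and boundedness hypotheses.
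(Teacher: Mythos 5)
Your proposal is correct in substance and follows the same core strategy as the paper: a Gummel-type decoupling map (semilinear Poisson step with frozen Slotboom variables, then the two linearized self-adjoint continuity equations, then the interface update $\alpha=M_\alpha$, $\gamma=M_\gamma$ evaluated at the \emph{input} potential), combined with Schauder's fixed-point theorem and maximum-principle/truncation arguments to enforce the box $[\underline V,\overline V]\times[1/K,K]^2$. The one genuine divergence is how the randomness is handled. The paper defines the convex set $N$ directly as a subset of the Bochner spaces $L^2(\Omega;H^1(D))\times L^2(\Omega;H^1(\DSi))^2\times L^2(\Omega;H^1(\Gamma))^2$ (with $\alpha,\gamma$ carried as components of the fixed point) and applies Schauder there, citing \cite[Theorem~2.2]{baumgartner2012existence} and \cite[Theorem~1]{Heitzinger2015existence} for continuity, self-mapping, and precompactness of $F(N)$; note that precompactness in $L^2(\Omega;\cdot)$ does not follow from Rellich alone and is exactly the point delegated to those references. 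You instead run Schauder pathwise for each fixed $\omega$ and then assemble a solution by a Castaing-type measurable selection from the fixed-point set. This buys you a cleaner compactness argument (ordinary Rellich and compact trace suffice for fixed $\omega$), but it shifts the burden onto measurability: since the fixed point is not unique away from equilibrium, you must verify that $\omega\mapsto\mathrm{Fix}(T_\omega)$ is a measurable multifunction with closed values before a selection theorem applies, a step you name but do not carry out. Both routes are viable; the paper's avoids selection theory at the price of Bochner-space compactness, yours does the reverse. Your additional detail on the monotone-operator solvability of the Poisson step, the sub-/super-solution origin of $\underline V$ and $\overline V$, and the Stampacchia truncation for $u,v$ is consistent with (and more explicit than) the lemmata the paper invokes without proof.
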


\begin{proof}
  The existence of the solution is proved using the Schauder
  fixed-point theorem and the estimates are obtained from a maximum
  principle. First, we define a suitable space
  \begin{multline*}
    N := \bigl\{  (V,u,v,\alpha, \gamma) \in L^2(\Omega; H^1(D))
    \times  L^2(\Omega;H^1({D_\mathrm{Si}}))^2 
    \times L^2(\Omega; H^1(\Gamma))^2 \bigm|\\
    \underline{V}\leq V(x,\omega)\leq \overline{V}
    \quad\text{a.e.\ in } D \times\Omega,
    \quad\frac{1}{K}\leq u(x,\omega), v(x,\omega)\leq K
    \quad\text{a.e.\ in } \DSi \times \Omega,\\
    \alpha, \gamma\text{ bounded a.e.\ on } \Gamma\times \Omega \bigr\},
  \end{multline*}
  which is closed and convex. Then we define a fixed-point
  map $F\colon N \to N $ by
  \begin{equation*}
    F(V_0,u_0,v_0,\alpha_0, \gamma_0)
    := (V_1,u_1,v_1,\alpha_1,\gamma_1),
  \end{equation*}
  where the elements of the vector $(V_1,u_1,v_1,\alpha_1,\gamma_1)$
  are the solutions of the following equations for given data
  $(V_0,u_0,v_0,\alpha_0,\gamma_0)$.
  \begin{enumerate}
  \item Solve the elliptic equation
      \begin{alignat*}{3}
    	-\nabla\cdot(A\nabla  V_1)
        &=qn_i(e^{-V_1/U_T}v_0-e^{V_1/U_T}u_0)+q\Cdop \quad&&\text{in}~D,\\
    	\mathbf{n}\cdot\nabla V_1&=0&&\text{on}~\partial D_N,\\
        V_1&=V_D&&\text{on}~\partial D_D
      \end{alignat*}
    for $V_1$.
  \item Solve the elliptic equation
    \begin{alignat*}{3}
      U_T\nabla \cdot(\mu_n e^{V_1/U_T}\nabla u_1)\\
      -\frac{u_1v_0-1}{\tau_p(e^{V_1/U_T}u_0+1)+\tau_n(e^{-V_1/U_T}v_0+1)}&=0\quad&&\text{in}~\DSi,\\  
      \mathbf{n}\cdot\nabla u_1&=0&&\text{on}~\partial D_{\mathrm{Si},N},\\
      u_1&=u_D&&\text{on}~\partial D_{\mathrm{Si},D}
    \end{alignat*}
    for $u_1$.
  \item Solve the elliptic equation
    \begin{alignat*}{3}
      U_T\nabla\cdot(\mu_p e^{-V_1/U_T}\nabla v_1)\\
      -\frac{u_0v_1-1}{\tau_p(e^{V_1/U_T}u_0+1)+\tau_n(e^{-V_1/U_T}v_0+1)}&=0\quad&&\text{in}~\DSi\\  
      \mathbf{n}\cdot\nabla v_1&=0&&\mathrm{on}~\partial D_{\mathrm{Si},N},\\
      v_1&=v_D&&\mathrm{on}~\partial D_{\mathrm{Si},D},
    \end{alignat*}
    for $v_1$. 
  \item Update the surface-charge density and dipole-moment density
    according to the microscopic model
    \begin{align*}
      \alpha_1(y,\omega) &:=M_\alpha(V_0),\\
      \gamma_1(y,\omega) &:=M_\gamma(V_0).
    \end{align*} 
  \end{enumerate} 

  Using Lemmata on the existence and uniqueness of solutions of
  elliptic boundary-value problems with interface conditions, every
  equation present in the model is uniquely solvable.  Therefore the
  map~$F$ is well-defined. Furthermore, continuity and the
  self-mapping property of~$F$ as well as the precompactness of $F(N)$
  can be shown similarly to \cite[Theorem
  2.2]{baumgartner2012existence} and
  \cite[Theorem~1]{Heitzinger2015existence}.  Therefore, applying the
  Schauder fixed-point theorem yields a fixed-point of~$F$, which is a
  weak solution of \eqref{modeleqn}.
\end{proof}

In general, the solution in Theorem~\ref{existence} is not unique;
uniqueness of the solution only holds in a neighborhood around thermal
equilibrium.  This necessitates sufficiently small Dirichlet boundary
conditions.  The following theorem yields local uniqueness of the
solution of our system \eqref{modeleqn} of model equation. The proof
is based on the implicit-function theorem.

\begin{theorem}[Local uniqueness]\label{uniqueness}
  Under Assumption~\ref{assump1}, for every $f(x,\omega) \in
  L^2(\Omega; L^2(D))$, $V_D, u_D, v_D\in H^{1/2}(\partial D)$,
  $\alpha\in L^2(\Omega; H^{1/2}(\Gamma))$, and $\gamma\in L^2(\Omega;
  L^2(\Gamma))$, there exists a sufficiently small $\sigma \in
  \mathbb{R}$ with $|U|<\sigma$ such that the stochastic problem in
  the existence theorem~\ref{existence} has a locally unique solution
  \begin{align*}
    \Big(V^{\ast}(U), u^{\ast}(U), v^{\ast}(U), \alpha^{\ast}(U), \gamma^{\ast}(U)\Big)
    &\in L^2(\Omega;H^2(D \setminus \Gamma))
      \times L^2(\Omega;H^2(D_{\mathrm{Si}}))^2\\
    &\times L^2(\Omega;H^{1/2}(\Gamma))
      \times L^2(\Omega;L^{2}(\Gamma)).
  \end{align*}
  The solution satisfies
  \begin{align*}
    \Big(V^{\ast}(0), u^{\ast}(0), v^{\ast}(0), \alpha^{\ast}(0), \gamma^{\ast}(0)\Big)
    = (V_e, 1, 1, \alpha_e, \gamma_e)
  \end{align*}
  and it depends continuously differentiably on~$U$ as a map from $\{
  U \in \mathbb{R}^k,\; |U|< \sigma \} $ into $ L^2(\Omega;H^2(D
  \setminus \Gamma)) \times L^2(\Omega;H^2(D_{\mathrm{Si}}))^2 \times
  L^2(\Omega;H^{1/2}(\Gamma)) \times L^2(\Omega;L^{2}(\Gamma))$.
\end{theorem}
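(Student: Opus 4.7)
The plan is to recast the stochastic boundary-value problem as the zero set of a smooth map between Bochner spaces and invoke the implicit-function theorem at thermal equilibrium. I introduce the solution space
\[
\mathcal{X} := L^2(\Omega; H^2(D \setminus \Gamma)) \times L^2(\Omega; H^2(\DSi))^2 \times L^2(\Omega; H^{1/2}(\Gamma)) \times L^2(\Omega; L^2(\Gamma))
\]
and an analogous target space $\mathcal{Y}$ collecting the residuals of the three elliptic equations in \eqref{modeleqn}, the two interface relations $\alpha = M_\alpha(V)$ and $\gamma = M_\gamma(V)$, and the traces on the Dirichlet boundaries. Define $F\colon \mathcal{X} \times \mathbb{R}^k \to \mathcal{Y}$ by assembling these residuals, with the applied voltage $U\in\mathbb{R}^k$ entering only through the Ohmic Dirichlet data $V_D,u_D,v_D$ built from the quasi-Fermi level $V_1(\cdot;U)$. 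For $U = 0$ the system is in thermal equilibrium: $u \equiv v \equiv 1$ annihilates the Shockley--Read--Hall numerator and reduces the whole system to a single nonlinear Poisson equation with a unique solution $V_e$, so that $F(V_e,1,1,\alpha_e,\gamma_e,0) = 0$ with $\alpha_e := M_\alpha(V_e)$ and $\gamma_e := M_\gamma(V_e)$.

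Next I would verify that $F$ is of class $C^1$ on a neighbourhood of this equilibrium point. The exponentials $\e^{\pm V/U_T}$, the $\sinh$ term, and the Shockley--Read--Hall kernel are smooth Nemytskii maps on the relevant Bochner spaces provided the arguments remain in $L^\infty$, which is guaranteed near the equilibrium by the pointwise bounds of Theorem~\ref{existence}. The interface functionals $M_\alpha$ and $M_\gamma$ are continuous by Assumption~\ref{assump1}, and for the implicit-function argument a strengthening to Fréchet differentiability near $V_e$ is needed; this is available from the concrete microscopic models (Poisson--Boltzmann or Metropolis Monte-Carlo) cited after the system statement. The dependence of $F$ on $U$ is affine through the Dirichlet lifts and hence smooth.

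The main obstacle is to show that the partial Fréchet derivative $\partial_{(V,u,v,\alpha,\gamma)} F$ at the equilibrium is a topological isomorphism onto $\mathcal{Y}$. The linearization is block-triangular: the equation for $\delta V$ carries a strictly positive zeroth-order reaction term $q n_i(\e^{V_e/U_T}+\e^{-V_e/U_T})/U_T$ in $\DSi$ together with the $\beta\cosh$-term coming from the Poisson--Boltzmann nonlinearity in $\Dliq$, so uniform ellipticity of $A$ combined with this reaction yields $\P$-almost-sure Lax--Milgram coercivity with deterministic constants, hence bijectivity on $L^2(\Omega;H^1_0(D))$; $H^2$-regularity then follows from the $C^2$-smoothness of $\partial D_D$ and $\Gamma$. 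The linearizations for $\delta u$ and $\delta v$ are self-adjoint elliptic problems with uniformly bounded diffusion $\mu_{n,p}\e^{\pm V_e/U_T}$ and a nonnegative zeroth-order term inherited from differentiating the recombination kernel at $u = v = 1$, so the same Lax--Milgram argument applies. The coupling through $\delta\alpha,\delta\gamma$ is absorbed by continuous liftings of the interface data, while the coupling through $M_\alpha'(V_e)$ and $M_\gamma'(V_e)$ enters as a bounded (and, by the compact embedding $H^1\hookrightarrow L^2$, essentially compact) perturbation, which preserves invertibility provided $\sigma$ is chosen small enough.

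With $F\in C^1$ and its $(V,u,v,\alpha,\gamma)$-partial derivative invertible at $(V_e,1,1,\alpha_e,\gamma_e,0)$, the Banach-space implicit-function theorem produces $\sigma > 0$ and a unique $C^1$ curve $U \mapsto (V^*,u^*,v^*,\alpha^*,\gamma^*)(U)$ solving $F(\cdot,U) = 0$ for $|U| < \sigma$, with the prescribed value at $U=0$. This delivers both the asserted local uniqueness and the continuously differentiable dependence on $U$, running parallel to the deterministic argument of \cite[Theorem~5.2]{baumgartner2012existence} but now with every function space replaced by its Bochner counterpart over $(\Omega,\mathbb{A},\P)$.
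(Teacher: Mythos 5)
Your proposal follows essentially the same route as the paper's proof: formulate the system as the zero set of a map on Bochner spaces, observe that $(V_e,1,1,\alpha_e,\gamma_e)$ solves the equilibrium problem at $U=0$, show the partial Fr\'echet derivative in the unknowns is boundedly invertible there, and apply the Banach-space implicit-function theorem. The paper's own proof is terser (it shifts by the Dirichlet lifts $V-V_D(U)$ rather than putting traces into the target space, and delegates the invertibility of the linearization to \cite[Theorem~2.2]{baumgartner2012existence}), but your argument, including the coercivity of the linearized equations at $u=v=1$ and the correctly flagged need to strengthen the mere continuity of $M_\alpha,M_\gamma$ to differentiability, is the same proof with more of the details supplied.
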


\begin{proof}
  We call the equilibrium potential $V_e(x,\omega)$ and the
  equilibrium surface densities $\alpha_e(x,\omega)$ and
  $\gamma_e(x,\omega)$. $(V_e, 1, 1, \alpha_e, \gamma_e)$ is a
  solution of the stochastic equilibrium boundary-value problem, which
  has a unique solution due to the existence and uniqueness of
  solutions of stochastic semilinear elliptic boundary-value problems
  of the form
  \begin{alignat*}{2}
    -\nabla\cdot(A(x,\omega)\nabla  V_e(x,\omega)) &=
    q\Cdop(x,\omega)-qn_i(e^{V_e(x,\omega)/U_T}-e^{-V_e(x,\omega)/U_T})
    &\quad&\text{in } \DSi,\\
    -\nabla\cdot(A(x,\omega)\nabla  V_e(x,\omega)) &= 0
    &&\text{in } \Dox,\\
    -\nabla\cdot(A(x,\omega)\nabla  V_e(x,\omega)) &=
    -2\eta \sinh (\beta(V_e(x,\omega)-\Phi(x,\omega)))
    &&\text{in } D_{\text{liq}},\\
    V_e (0+,y,\omega)-V_e (0-,y,\omega) &= \alpha_e(y,\omega)
    &&\text{on } \Gamma,\\
    A(0+)\partial_x V_e (0+,y,\omega)
    &-A(0-)\partial_x V_e(0-,y,\omega)
    = \gamma_e(y,\omega)
    &&\text{on } \Gamma,\\
    V_e(x,\omega) &= V_D(x)
    &&\text{on } \partial D_D,\\
    \mathbf{n}\cdot\nabla V_e(x,\omega) &=0
    &&\text{on } \partial D_N. 
  \end{alignat*}
  To apply the implicit-function theorem, we define the map
  \begin{align*}
    & G \colon B \times S_{\sigma_1(0)} \to L^2(\Omega;L^2(D))\times L^2(\Omega;L^2(\DSi))^2 
      \times L^2(\Omega;L^2(\Gamma))^2,\\
    & G (V,u,v,\alpha, \gamma,U)=0,
  \end{align*}
  where $G$ is given by the boundary-value problem \eqref{modeleqn} after substituting
  $\overline{V} := V - V_D (U )$, $ \overline{u} := u-u_D (U )$, and
  $\overline{v} := v - v_D (U )$.  $B$ is an open subset of
  $L^2(\Omega;H^2_{\partial}(D))\times
  L^2(\Omega;H^2_{\partial}(D_{\rm Si}))^2 \times
  L^2(\Omega;L^2(\Gamma))^2$ with
  \begin{align*}
    H^2_{\partial}(D):=\{ \phi \in H^2(D)
    \mid \mathbf{n} \cdot \nabla \phi=0 \text{ on } \partial D_N, \;
    \phi=0 \text{ on } \partial D_D \},
  \end{align*} 
  and the sphere $S_{\sigma_1}$ with radius $\sigma_1$ and center~$0$
  is a subset of $\mathbb{R}^d$. The equilibrium solution
  $(V_e-V_D(0),0,0,\alpha_e,\gamma_e,0)$ is a solution of the equation
  $G=0$. One can show that the Fr\'{e}chet derivative
  $D_{(V,u,v,\alpha,\gamma)}G(V_e-V_D(0),0,0,\alpha_e,\gamma_e,0)$ has
  a bounded inverse (see, e.g.,
  \cite[Theorem~2.2]{baumgartner2012existence}). Then the
  implicit-function theorem implies uniqueness of the solution of
  \eqref{modeleqn}.
\end{proof}

\section{Multi-Level Monte-Carlo Finite-Element Method}
\label{s:MLMCFEM}

We start by briefly recapitulating the finite-element approximation of
the system of model equations considered here. Then we review the
types of error in the Monte-Carlo approximation of solutions of
stochastic partial differential equations in Section~\ref{s:MC-FEM}.
In Section~\ref{s:MLMC-FEM}, a multi-level Monte-Carlo (MLMC)
finite-element (FE) method for the solution of the system of
stochastic equations \eqref{modeleqn} is developed. We give an error
bound for MLMC-FEM approximation and discuss the computational
complexity.

\subsection{The Finite-Element Method}\label{s:GFEM}

In this subsection, we briefly recapitulate the Galerkin finite-element
approximation and fix some notation. It provides the foundation for
the following section.

We suppose that the domain~$D$ can be partitioned into quasi-uniform
triangles or tetrahedra such that sequences $\{\tau_{h_{\ell}}
\}_{{\ell}=0}^{\infty}$ of regular meshes are obtained. For any $ \ell
\geq 0$, we denote the mesh size of $\tau_{h_{\ell}}$ by
\begin{equation*}
  h_{\ell}
  := \max_{K\in \tau_{h_\ell}}\{ \operatorname{diam} K \}.
\end{equation*}
To ensure that the mesh quality does not deteriorate as refinements
are made, shape-regular meshes can be used.

\begin{definition}[Shape regular mesh]\label{def3}
  A sequence $\{\tau_{h_{\ell}} \}_{l=0}^{\infty}$ of meshes is
  \emph{shape regular} if there exists a constant $\kappa<\infty$
  independent of~$\ell$ such that
  \begin{equation*}
    \frac{h_K}{\rho_K}\geq \kappa\quad \forall K \in \tau_{h_\ell}.
  \end{equation*}
  Here $\rho_K$ is the radius of the largest ball that can be
  inscribed into any $K \in \tau_{h_{\ell}}$.
\end{definition}

Uniform refinement of the mesh can be achieved by regular subdivision.
This results in the mesh size
\begin{equation}\label{h_ell}
  h_\ell = r^{-\ell} h_0,
\end{equation}
where $h_0$ denotes the mesh size of the coarsest triangulation and
$r>1$ is independent of~$\ell$. The nested family
$\{\tau_{h_{\ell}} \}_{\ell=0}^{\infty} $ of regular triangulations
obtained in this way is shape regular.

The Galerkin approximation is the discrete version of the weak
formulation in (\ref{weaksol}) of the stochastic elliptic
boundary-value problem (\ref{modeleqn}). We consider finite-element
discretizations with approximations $u_h \in X_{h_\ell}$ of $u \in X$.
Given a mesh $\tau_{h_\ell}$, $X$ is the solution space \eqref{sol
  space} and $X_{h_\ell}\subset X$ is the discretized space. For
all $k \geq 1$, it is defined as
\begin{equation}\label{discr. space}
  X_{h_\ell}
  := \mathbb{P}^k(\tau_{h_{\ell}})
  := \{u\in X  \;\mid\;
  u \vert_K\in \mathbb{P}^k(K) \;\; \forall K\in \tau_{h_{\ell}} \},
\end{equation}
where ${\mathbb{P}}^k(K) := \operatorname{span}\{ x^{\alpha} \mid
|\alpha| \leq k\}$ is the space of polynomials of total degree less
equal~$k$.
The space $X_0$ is the space \eqref{test space} of test functions. The
discretized test space ${X_0}_{h_\ell}\subset X_0$ is defined
analogously to \eqref{discr. space}.

After introducing the finite-element spaces, everything is ready to
define the Galerkin approximation.

\begin{definition}[Galerkin approximation]\label{def4}
  Suppose $X_{h_\ell}\subset X$ and ${X_0}_{h_\ell}\subset X_0$.  The
  \emph{Galerkin approximation} of
  \eqref{simplemodel}
  is the function
  \begin{equation*}
    w_{h_\ell}\in L^2(\Omega;X_{h_\ell})
  \end{equation*}
  that satisfies
  \begin{equation}\label{discweaksol}
    B(w_{h_\ell},\phi_{h_\ell}) = F(\phi_{h_\ell})\qquad
    \forall \phi_{h_\ell} \in L^2(\Omega; {X_0}_{h_\ell}),
  \end{equation}
  where $B$ and $F$ are defined in (\ref{weaksol}).
\end{definition}

\subsection{Monte-Carlo Finite-Element Approximation}\label{s:MC-FEM}

The straightforward Monte-Carlo method for a stochastic PDE
approximates the expectation $\mathbb{E}[u]$ of the solution~$u$ by
the sample mean of a (large) number of evaluations. Since we use the
same finite-element mesh~$\tau$ with the mesh size~$h$ for all
samples, we drop the index~$\ell$ in this subsection for the MC-FEM.
We approximate $\mathbb{E}[u]$ by $\mathbb{E}[u_h]$, where $u_h$ is
again the FE approximation of~$u$ using a mesh of size~$h$. The
standard MC estimator $\EMC$ for $\mathbb{E}[u_h]$ is the sample mean
\begin{equation}\label{MC-estimator}
  \EMC[u_h]
  := \hat{u}_h
  := \frac{1}{M} \sum_{i=1}^{M} u_h^{(i)}.
\end{equation}
where $u_h^{(i)} = u_h(x,\omega^{(i)})$ is the $i$th
sample of the solution.

The following lemma shows the error of the MC estimator for a random
variable~$u$ which is not descretized in space is of order
$O(M^{-1/2})$.

\begin{lemma}\label{lemma:general MC error}
  For any number of samples $M \in \mathbb{N}$ and for a random variable
  $u \in L^2(\Omega;X)$, the inequality
  \begin{equation}\label{general MC error}
    \| {\mathbb{E}}[u]-\EMC[u] \|_{L^2(\Omega;X)}
    = M^{-1/2} \sigma[u]
  \end{equation}
  holds for the MC error, where $\sigma[u] :=
  \|\E[u]-u\|_{L^2(\Omega;X)}$.
\end{lemma}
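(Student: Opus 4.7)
The plan is to reduce the statement to the classical variance identity for sums of i.i.d. random variables, reinterpreted in the Bochner/Hilbert setting. Let $u^{(1)},\dots,u^{(M)}$ denote the i.i.d.\ samples of $u$ used to form $\EMC[u]$ in \eqref{MC-estimator}. Since the sampling is independent of the spatial variable, each $u^{(i)}$ has the same distribution as $u$, so $\E[u^{(i)}] = \E[u]$ for every $i$. Writing the error as
\begin{equation*}
  \E[u] - \EMC[u] = \frac{1}{M}\sum_{i=1}^{M}\bigl(\E[u] - u^{(i)}\bigr),
\end{equation*}
I would first observe that each summand lies in the Hilbert space $L^2(\Omega;X_0)$, since although $X = H^1_g(D)$ is affine, the difference of two elements of $X$ lies in the linear test space $X_0 = H^1_0(D)$ from \eqref{test space}.

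Next I would square the $L^2(\Omega;X)$ norm and expand using the inner product on $X_0$:
\begin{equation*}
  \|\E[u]-\EMC[u]\|_{L^2(\Omega;X)}^{2}
  = \frac{1}{M^{2}}\,\E\!\left[\,\sum_{i=1}^{M}\sum_{j=1}^{M}
    \bigl\langle \E[u]-u^{(i)},\,\E[u]-u^{(j)}\bigr\rangle_{X_0}\right].
\end{equation*}
For $i\neq j$, the samples $u^{(i)}$ and $u^{(j)}$ are independent, and each centered term $\E[u]-u^{(i)}$ has zero mean in the Bochner sense. Pulling the expectation through the bilinear inner product (using the Bochner-integral version of Fubini / the fact that $\E$ commutes with continuous bilinear forms on integrable random elements of a Hilbert space) annihilates all off-diagonal terms. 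Only the $M$ diagonal terms survive, each contributing $\E\!\left[\|\E[u]-u^{(i)}\|_{X_0}^{2}\right] = \sigma[u]^{2}$ by definition of $\sigma[u]$ in the statement.

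The computation therefore collapses to
\begin{equation*}
  \|\E[u]-\EMC[u]\|_{L^2(\Omega;X)}^{2}
  = \frac{1}{M^{2}}\cdot M\cdot \sigma[u]^{2}
  = M^{-1}\sigma[u]^{2},
\end{equation*}
and taking square roots yields the claimed identity. The only genuinely delicate point, and the one I would write out most carefully, is the justification that the off-diagonal cross terms vanish: this requires combining independence of the samples with the fact that the Bochner expectation commutes with the continuous bilinear pairing $\langle\cdot,\cdot\rangle_{X_0}$, which in turn relies on $u \in L^2(\Omega;X)$ so that each $\E[u]-u^{(i)}$ is Bochner-square-integrable. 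Everything else is a routine algebraic identity.
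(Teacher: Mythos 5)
Your proposal is correct and follows essentially the same route as the paper's proof: square the $L^2(\Omega;X)$ norm, pass the squared norm of the centered sum to the sum of squared norms, and use identical distribution of the samples to collapse the diagonal to $M^{-1}\sigma^2[u]$. The only difference is that you explicitly justify the vanishing of the off-diagonal cross terms via independence and the Hilbert-space inner product, a step the paper's calculation performs silently in passing from its first line to its second.
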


\begin{proof}
  The result follows from the calculation
  \begin{align*}
    \| {\mathbb{E}}[u]-\EMC[u] \|^2_{L^2(\Omega;X)}
    &=\E\Big[ \Big\| \E[u] - \frac{1}{M}\sum_{i=1}^{M}u^{(i)}  \Big\|_X^2\Big]\\ 
    &=\frac{1}{M^2}\sum_{i=1}^{M}\E\Big[ \| \E[u] - u^{(i)}  \|_X^2\Big]\\ 
    &=\frac{1}{M}\E\Big[ \| \E[u] - u  \|_X^2\Big]
      =M^{-1}\sigma^2[u].
  \end{align*}
\end{proof}

Next we generalize the result to the finite-element solution by using
the MC estimator to approximate the expectation $\mathbb{E}[u]$ of a
solution~$u$ of an SPDE, which is descretized in space by the
finite-element method. In other words, if $u_h$ and $\hat{u}_h$ are
the finite-element and MC solutions of the SPDE, respectively, then we
have
\begin{equation*}
  \mathbb{E}[u] \approx \mathbb{E}[u_h] \approx \hat{u}_h.
\end{equation*}
Therefore, the MC-FEM method involves two approximations and hence
there are two sources of error.
\begin{description}
\item[Discretization error] The approximation of $\mathbb{E}[u]$ by
  $\mathbb{E}[u_h]$ gives to the discretization error, which stems
  from the spatial discretization.
\item[Statistical error] The approximation of the expected value
  $\mathbb{E}[u_h]$ by the sample mean $\hat{u}_h$ gives rise to the
  statistical error, which is caused by the MC estimator.
\end{description}

Lemma \ref{lemma:general MC error} takes care of the statistical
error. The order of the discretization error depends on the order of
the finite-element method.

\begin{proposition}\label{prop2}
  Suppose $\alpha, C_0, C_1 \in \R^+$. Let $\EMC[u_h]$ be the
  Monte-Carlo estimator with~$M$ samples to approximate the
  expectation $\mathbb{E}[u]$ of a solution $u(\cdot,\omega) \in X$ of
  an SPDE by using a FE solution $u_h(\cdot,\omega) \in X_h$ with mesh
  size~$h$. Suppose that the discretization error converges with order
  $\alpha$, i.e.,
  \begin{equation}\label{MC:assump1}
    \|\mathbb{E}[u-u_h]\|_X \le C_1 h^\alpha,
  \end{equation}
  and that the estimate
  \begin{equation}\label{MC:assump2}
    \sigma[u_h]\le C_0
  \end{equation}   
  holds. Then the error of the MC estimator satisfies
  \begin{equation}\label{MC error}
    \| \E[u]  - \EMC[u_h] \|_{L^2(\Omega;X)}
    = O(h^\alpha)+O(M^{-1/2}).
  \end{equation}
\end{proposition}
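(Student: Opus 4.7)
The plan is to bound the total error by a triangle inequality that separates the two distinct sources of error identified in the discussion preceding the proposition, namely the spatial discretization error and the statistical (Monte-Carlo) error. Concretely, I would write
\begin{equation*}
  \|\E[u] - \EMC[u_h]\|_{L^2(\Omega;X)}
  \le \|\E[u] - \E[u_h]\|_{L^2(\Omega;X)}
    + \|\E[u_h] - \EMC[u_h]\|_{L^2(\Omega;X)}
\end{equation*}
and then treat each term in isolation, using the two hypotheses \eqref{MC:assump1} and \eqref{MC:assump2} together with Lemma~\ref{lemma:general MC error}.

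For the first (bias) term, the key observation is that $\E[u] - \E[u_h]$ is a \emph{deterministic} element of $X$, so the Bochner norm collapses to the $X$-norm, i.e.\ $\|\E[u] - \E[u_h]\|_{L^2(\Omega;X)} = \|\E[u-u_h]\|_X$. Assumption \eqref{MC:assump1} then directly yields the bound $C_1 h^\alpha$, which is $O(h^\alpha)$. I would be careful to justify the interchange that identifies $\E[u]-\E[u_h]$ with $\E[u-u_h]$, which follows from linearity of the Bochner expectation under the integrability assumption $u, u_h \in L^2(\Omega;X)$.

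For the second (statistical) term, I would apply Lemma~\ref{lemma:general MC error} with the random variable $u_h$ in place of $u$; this is legitimate because the lemma only uses square-integrability and the i.i.d.\ sampling structure, both of which transfer verbatim to the finite-element iterates $u_h^{(i)}$ obtained from i.i.d.\ realizations $\omega^{(i)}$. The lemma gives
\begin{equation*}
  \|\E[u_h] - \EMC[u_h]\|_{L^2(\Omega;X)} = M^{-1/2}\sigma[u_h],
\end{equation*}
and assumption \eqref{MC:assump2} then bounds this by $C_0 M^{-1/2} = O(M^{-1/2})$.

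Combining the two estimates through the triangle inequality yields \eqref{MC error}. The proof is essentially a bookkeeping exercise; the only subtle point worth spelling out is the legitimacy of applying the sample-space MC lemma to the spatially discretized variable $u_h$, since $u_h$ lives in a different (finite-dimensional) subspace $X_h \subset X$ than the continuous solution $u$. Because $X_h \subset X$ and the hypothesis \eqref{MC:assump2} provides the requisite second-moment control, no new ingredient is needed. I would therefore expect no genuine obstacle; the main care is simply to keep the Bochner norms straight and to ensure the constants in front of $h^\alpha$ and $M^{-1/2}$ are independent of $h$ and $M$, as is needed for the forthcoming complexity analysis.
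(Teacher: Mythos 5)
Your proof is correct and follows essentially the same route as the paper: the same triangle-inequality split into bias and statistical terms, the same application of Lemma~\ref{lemma:general MC error} to $u_h$, and the same use of the two hypotheses to conclude. The extra care you take in justifying the collapse of the Bochner norm for the deterministic bias term is a welcome (if minor) elaboration that the paper leaves implicit.
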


\begin{proof}
  We use the root mean square error (RMSE) to measure the accuracy of
  the total approximation and analyze the contribution of both errors.
  Using the triangle inequality and Lemma~\eqref{lemma:general MC
    error}, we find
  \begin{equation}\label{MC-error1}
    \begin{split}
      \RMSE
      &:=   \| \E[u]  - \EMC[u_h]  \|_{L^2(\Omega;X)} \\
      &\le  \| \E[u]  - \E[u_h]  \|_X +  \| \E[u_h]  - \EMC[u_h]  \|_{L^2(\Omega;X)}\\
      &\le \| \E[u-u_h] \|_X  +  M^{-1/2} \sigma[u_h]\\
      &\le C_1h^\alpha + C_0  M^{-1/2}\\
      &= O(h^\alpha)+O(M^{-1/2}).
    \end{split}
  \end{equation}
  The last expressions are obtained by using the assumtions
  \eqref{MC:assump1} and \eqref{MC:assump2}.
\end{proof}

\subsection{Multi-Level Monte-Carlo Finite-Element Approximation}
\label{s:MLMC-FEM}

In this section, we first present the MLMC FE method and an its error.
In this method, several levels of meshes are used and the MC estimator
is employed to approximate the solution on each level independently.
We start by discretizing the variational formulation \eqref{weaksol}
on the sequence
\begin{equation*}
  X_{h_0} \subset X_{h_1} \subset \cdots \subset
  X_{h_L} \subset X
\end{equation*}
of finite-dimensional sub-spaces, where $X_{h_\ell} :=
{\mathbb{P}}^1(\tau_{h_\ell})$ for all $\ell \in \{0,1,2,\ldots,L\}$
(see Section~\ref{s:GFEM}). The finite-element approximation at
level~$L$ can be written as the telescopic sum
\begin{equation*}
  u_{h_L}=u_{h_0}+\sum_{\ell=1}^{L}(u_{h_\ell}-u_{h_{{\ell}-1}}),
\end{equation*}
where each $u_{h_{\ell}}$ is the solution on the mesh
$\tau_{h_{\ell}}$ at level~$\ell$. Therefore, the expected value of
$u_{h_L}$ is given by
\begin{equation}\label{MLMC}
  {\mathbb{E}}[u_{h_L}]
  ={\mathbb{E}}[u_{h_0}]+{\mathbb{E}}\left[\sum_{{\ell}=1}^{L}(u_{h_{\ell}}-u_{h_{{\ell}-1}})\right]
  ={\mathbb{E}}[u_{h_0}]+\sum_{{\ell}=1}^{L}{\mathbb{E}}[u_{h_{\ell}}-u_{h_{{\ell}-1}}].
\end{equation}
In the MLMC FEM, we estimate
${\mathbb{E}}[u_{h_{\ell}}-u_{h_{{\ell}-1}}]$ by a level dependent
number $M_{\ell}$ of samples. The MLMC estimator ${\mathbb{E}}[u]$ is
defined as
\begin{equation}\label{MLMC-estimator}
  \EMLMC[u]
  :=\hat{u}_{h_L}
  :=\EMC[u_{h_0}]+\sum_{{\ell}=1}^{L} \EMC[u_{h_{\ell}}-u_{h_{{\ell}-1}}],
\end{equation}
where $E_\mathrm{MC}$ is the Monte-Carlo estimator defined in
(\ref{MC-estimator}). Therefore, we find
\begin{equation}
  \hat{u}_{h_L}
  =\frac{1}{M_0}\sum_{i=1}^{M_0}u_{h_0}^{(i)}+\sum_{{\ell}=1}^{L}\frac{1}{M_{\ell}}\sum_{i=1}^{M_{\ell}}(u_{h_{\ell}}^{(i)}-u_{h_{{\ell}-1}}^{(i)}).
\end{equation}

It is important to note that the approximate solutions
$u_{h_{\ell}}^{(i)}$ and $u_{h_{\ell-1}}^{(i)}$ correspond to the same
sample~$i$, but are computed on different levels of the mesh, i.e., on
the meshes $M_{\ell}$ and $M_{{\ell}-1}$, respectively.

Recalling the two sources of error constituting the MC-FE error, the
following result holds for the MLMC-FEM error.

\begin{proposition}\label{prop3}
  Suppose $\alpha, \beta, C_0, C_1 \in \R^+$.  Let $\EMLMC[u_{h_L}]$ be
  the multi-level Monte-Carlo estimator to approximate the expectation
  $\mathbb{E}[u]$ of a solution $u(\cdot,\omega)\in X$ of an SPDE
  by using a FE solution $u_{h_\ell}(\cdot,\omega)\in X_{h_\ell}$ with
  $M_\ell$ samples in level $\ell$, $\ell \in \{0,1,2,\ldots,L\}$ and with
  mesh size~$h_\ell$.  Suppose that the convergence order~$\alpha$
  for the discretization error, i.e.,
  \begin{equation}\label{MLMC:assump1}
    \|\mathbb{E}[u - u_{h_\ell}]\|_X \le C_1 h_\ell^\alpha,
  \end{equation}
  the convergence order~$\beta$ for 
  \begin{equation}\label{MLMC:assump2}
    \sigma[u_{h_\ell} - u_{h_{\ell-1}}] \le C_0 h_{\ell-1}^\beta,
  \end{equation}
  and assume that the estimate
  \begin{equation}\label{MLMC:assump3}
    \sigma[u_{h_0}] \le C_{00}
  \end{equation}
  holds, where $\sigma[u]:=\|\E[u]-u\|_{L^2(\Omega;X)}$. Then the
  error of the MLMC estimator satisfies
  \begin{equation}\label{MLMC error}
    \| \mathbb{E}[u] - \EMLMC[u_{h_L}] \|_{L^2(\Omega;X)}
    =O(h_L^\alpha) + O(M_0^{-1/2}) + \sum_{\ell=1}^{L} O(M_\ell^{-1/2})O(h_{\ell-1}^\beta).
  \end{equation}
\end{proposition}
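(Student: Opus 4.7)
The plan is to mimic the proof of Proposition~\ref{prop2} but with the crucial observation that the MLMC estimator is a sum of independent MC estimators, one per level. First I would split the error via the triangle inequality into a bias part and a statistical part:
\begin{equation*}
  \| \mathbb{E}[u] - \EMLMC[u_{h_L}] \|_{L^2(\Omega;X)}
  \le \| \mathbb{E}[u - u_{h_L}] \|_X
  + \| \mathbb{E}[u_{h_L}] - \EMLMC[u_{h_L}] \|_{L^2(\Omega;X)}.
\end{equation*}
The first (deterministic) term is immediately bounded by $C_1 h_L^\alpha$ via assumption~\eqref{MLMC:assump1}, producing the $O(h_L^\alpha)$ contribution.

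For the statistical part, I would use the telescoping identity \eqref{MLMC} together with the definition \eqref{MLMC-estimator} of $\EMLMC$ to write
\begin{equation*}
  \mathbb{E}[u_{h_L}] - \EMLMC[u_{h_L}]
  = \bigl( \mathbb{E}[u_{h_0}] - \EMC[u_{h_0}] \bigr)
  + \sum_{\ell=1}^{L} \bigl( \mathbb{E}[u_{h_\ell} - u_{h_{\ell-1}}]
  - \EMC[u_{h_\ell} - u_{h_{\ell-1}}] \bigr).
\end{equation*}
The key structural point to invoke is that the samples used on different levels are drawn independently, so the $L+1$ summands on the right are mean-zero and uncorrelated in $L^2(\Omega;X)$. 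Taking $L^2(\Omega;X)$-norms therefore gives a Pythagorean sum rather than a triangle-inequality sum, and I can apply Lemma~\ref{lemma:general MC error} to each level separately.

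Applying the lemma to level $0$ with random variable $u_{h_0}$ and to level $\ell\ge 1$ with random variable $u_{h_\ell}-u_{h_{\ell-1}}$ yields
\begin{equation*}
  \| \mathbb{E}[u_{h_L}] - \EMLMC[u_{h_L}] \|_{L^2(\Omega;X)}^2
  = M_0^{-1}\sigma[u_{h_0}]^2
  + \sum_{\ell=1}^{L} M_\ell^{-1}\sigma[u_{h_\ell}-u_{h_{\ell-1}}]^2.
\end{equation*}
Substituting the assumed bounds \eqref{MLMC:assump2} and \eqref{MLMC:assump3} gives $M_0^{-1}C_{00}^2 + \sum_{\ell=1}^{L} M_\ell^{-1} C_0^2 h_{\ell-1}^{2\beta}$, and combining with the bias estimate via the triangle inequality (or equivalently, bounding the square root of a sum by the sum of square roots) produces the claimed asymptotic expression \eqref{MLMC error}.

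I expect the main subtlety, rather than any obstacle, to be justifying the cross-level independence cleanly: the proof must make explicit that the sample sets underlying $\EMC[u_{h_\ell}-u_{h_{\ell-1}}]$ for different values of $\ell$ are drawn independently (even though within a single level the pair $(u_{h_\ell}^{(i)},u_{h_{\ell-1}}^{(i)})$ uses the same~$\omega^{(i)}$, which is precisely what makes $\sigma[u_{h_\ell}-u_{h_{\ell-1}}]$ small under~\eqref{MLMC:assump2}). Everything else is a direct application of Lemma~\ref{lemma:general MC error} level by level.
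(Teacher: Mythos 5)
Your proposal is correct and follows essentially the same route as the paper: the same split into bias and statistical error, the same telescoping decomposition of the statistical term, and Lemma~\ref{lemma:general MC error} applied level by level before inserting assumptions \eqref{MLMC:assump1}--\eqref{MLMC:assump3}. The only deviation is that where you invoke cross-level independence to obtain a Pythagorean (sum-of-squares) identity and then relax it via $\sqrt{a+b}\le\sqrt{a}+\sqrt{b}$, the paper simply applies the triangle inequality across the $L+1$ level estimators; your intermediate bound is sharper, but both yield the stated asymptotic \eqref{MLMC error}.
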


\begin{proof}
  Similarly to the MC estimator, the RMSE assesses the accuracy of the
  MLMC FE estimator.  Using the triangle inequality and the relations
  \eqref{MLMC} and \eqref{MLMC-estimator}, we find
  \begin{align*}
    \RMSE
    &:=   \| \E[u]  - \EMLMC[u_{h_L}]  \|_{L^2(\Omega;X)} \\
    &\le  \| \E[u]  - \E[u_{h_L}]  \|_X +  \| \E[u_{h_L}]  - \EMLMC[u_{h_L}]  \|_{L^2(\Omega;X)}\\
    &\le \| \E[u-u_{h_L}] \|_X  +  \| \E[u_{h_0}]  - \EMC[u_{h_0}]  \|_{L^2(\Omega;X)} \\
    &\quad + \sum_{\ell=1}^{L}  \| \E[u_{h_\ell} - u_{h_{\ell-1}}]  - \EMC[u_{h_\ell} - u_{h_{\ell-1}}]  \|_{L^2(\Omega;X)}.
  \end{align*}
  Next we apply the
  assumptions~\eqref{MLMC:assump1}--\eqref{MLMC:assump3} and
  Lemma~\ref{lemma:general MC error} to obtain the asserted orders for
  the error by calculating
  \begin{equation}\label{MLMC error1}
    \begin{split}
      \RMSE 
      &\le \| \E[u-u_{h_L}] \|_X  +  M_0^{-1/2} \sigma[ u_{h_0} ]\\
      &\quad + \sum_{\ell=1}^{L} M_\ell^{-1/2} \sigma[ u_{h_\ell} - u_{h_{\ell-1}}  ]\\
      &\le C_1h_L^\alpha + C_{00}  M_0^{-1/2} + C_0 \sum_{\ell=1}^{L} M_\ell^{-1/2} h_{\ell-1}^\beta\\
      &= O(h_L^\alpha)+O(M_0^{-1/2}) + \sum_{\ell=1}^{L}O(M_\ell^{-1/2}) O(h_{\ell-1}^\beta).
    \end{split}
  \end{equation}
  This concludes the proof.
\end{proof}

\section{Optimal Monte-Carlo and Multi-Level Monte-Carlo Methods}
\label{s:complexity}

In this section, we first estimate the computational cost of the MLMC
FE method to achieve a given accuracy and compare it with the MC FE
method. Based on these considerations, the computational work is then
minimized for a given accuracy to be achieved in order to find the
optimal number of samples and the optimal mesh size.

As the model equations \eqref{modeleqn} are a system of PDEs, the work
estimate consists of the sum of the work for all equations, i.e., the
Poisson equation for~$V$ and the two drift-diffusion equations for~$u$
and~$v$. Therefore, the total computational work is given by
\begin{equation}\label{eq:cost}
  W := W_P + 2W_D = W_{P,a} + W_{P,s} + 2W_{D,a} + 2W_{D,s},
\end{equation} 
where the index~$P$ indicates the Poisson equation, the index~$D$
indicates the two drift-diffusion equations, the index~$a$ denotes
assembly of the system matrix, and the index~$s$ denotes solving the
system matrix. We assume that the necessary number of fixed-point or
Newton iterations to achieve numerical convergence is constant; this
is supported by the numerical results.

For each of these four parts the work in level~$\ell$
is given by
\begin{subequations}\label{e:work}
  \begin{align}
    (W_\ell)_{P,a} &= \mu_1 M_\ell h_\ell^{-\gamma_1},\\
    (W_\ell)_{P,s} &= \mu_2 M_\ell h_\ell^{-\gamma_2},\\
    (W_\ell)_{D,a} &= \mu_3 M_\ell h_\ell^{-\gamma_3},\\
    (W_\ell)_{D,s} &= \mu_4 M_\ell h_\ell^{-\gamma_4}
  \end{align}
\end{subequations}
with all $\mu_k > 0$ and $\gamma_k > 0$.  Here $M_\ell$
is the number of samples used at level~$\ell$, and $h_\ell$ is the
corresponding mesh size.

Analogously, in the case of the vanilla Monte-Carlo method, the
computational work is obtained without stratification, i.e., there is
only one level. In this case, we will drop the index~$\ell$.

The exponents (and constants) in equations \eqref{e:work} are
determined by the algorithm used for assembling the FE matrix in the
case of $W_{P,a}$ and $W_{D,a}$ (see, e.g.,
\cite{Cuvelier2013efficient} for an efficient algorithm) and by the
order of the FE discretization in the case of $W_{P,s}$ and $W_{D,s}$
(see Section \ref{s:MC-FEM}). The constants $\mu_i > 0$ depend on the
implementation.

\subsection{The Optimal Monte-Carlo Finite-Element Method}
\label{s:MC work}

In the case of the Monte-Carlo method, there is only one level so that
the index~$\ell$ will be dropped. We will choose the optimal~$M$
and~$h$ such that the total computational cost~$W$ is minimized given
an error bound~$\epsilon$ to be achieved. This optimization problem
with inequality constraints can be solved using the Karush-Kuhn-Tucker
(KKT) conditions, which are generalization of Lagrange multipliers in
the presence of inequality constraints.

In view of \eqref{e:work} and \eqref{MC-error1}, the most general
problem is the following. We minimize the computational work subject
to the accuracy constraint $\RMSE \le \varepsilon$, i.e., we solve the
optimization problem
\begin{equation}
  \begin{aligned}
    & \underset{M,h}{\text{minimize}}
    & &f(M,h):= \sum_{k=1}^N \mu_k M h^{-\gamma_k}\\
    & \text{subject to}
    & & g_1(M,h):=\varepsilon-\frac{C_0}{\sqrt{M}} - C_1 h^{\alpha} \geq 0,\\
    &&& g_2(M):= M  -1\geq0,\\
    &&& g_3(h):= h -\xi\geq0.
    \label{op:MC}
  \end{aligned}
\end{equation}
Here, $N\in\mathbb{N}$ and $\xi$ is the smallest positive normalized
floating-point number representable. Due to the exponents of~$h$
and~$M$, it is a nonlinear constraint optimization problem. Our goal
is to formulate the inequality constrained problem as an equality
constrained problem to which Newton's method can be applied. In order
to solve the optimization problem, we use the interior-point method
\cite{forsgren2002interior, wright2005interior}.

For each $\mu>0$, we replace the non-negativity constraints with
logarithmic barrier terms in the objective function
\begin{equation}
  \begin{aligned}
    & \underset{\chi,s}{\text{minimize}}
    & & f_{\mu}(\chi,s):=  f(\chi)-\mu\sum_{i} \ln(s_i)\\
    & \text{subject to}
    & & g(\chi)-s=0.
    \label{opt:MC1}
  \end{aligned}
\end{equation}
Here $\chi$, a vector, denotes $(M,h)$ and the vectors~$g$ and~$s$
represent the $g_i(x)$ and $s_i$, respectively. The $s_i$ are
restricted to be positive away from zero to ensure that the $\ln(s_i)$
are bounded. As $\mu$ decreases to zero, the minimum of $f_\mu$
approaches the minimum of~$f$. After denoting the Lagrange multiplier
for the system (\ref{opt:MC1}) by~$y$, the system
\begin{align*}
  \nabla f(\chi)-\nabla g(\chi)^T y&=0,\\
  SYe&=\mu e,\\
  g(\chi)-s&=0
\end{align*}
is obtained, where $S$ is a diagonal matrix with elements $s_i$, $e$
is a vector of all ones, and $\nabla g$ denotes the Jacobian of the
constraint~$g$. Now we apply Newton's method to compute the search
directions $\Delta \chi$, $\Delta s$, $\Delta h$ via
\begin{equation}\label{Newton}
  \begin{pmatrix}
    H(\chi,y) & 0 & -A(\chi)^T\\
    0 & Y & S\\
    A(x) &-I & 0
  \end{pmatrix} 
  \begin{pmatrix}
    \Delta \chi\\
    \Delta s  \\
    \Delta h    
  \end{pmatrix}
  =
  \begin{pmatrix}
    -\nabla f(\chi)+A(\chi)^Ty\\
    \mu e-SYe  \\
    -g(\chi)+s    
  \end{pmatrix}.
\end{equation}
The Hessian matrix is given by
\begin{equation*}
  H(\chi,y)=\nabla^2 f(\chi)-\sum_{i}y_i\nabla^2 g_i(\chi)
\end{equation*}
and $A(\chi)$ is the Jacobian matrix of the constraint (\ref{op:MC}).
The second equation is used to calculate $\Delta s$. By substituting
into the third equation, we obtain the reduced KKT system
\begin{equation}\label{Newton1}
  \begin{pmatrix}
    -H(\chi,y) & A(\chi)^T\\
    A(\chi) & SY^{-1} \\
  \end{pmatrix} 
  \begin{pmatrix}
    \Delta \chi\\
    \Delta s  \\  
  \end{pmatrix}
  =
  \begin{pmatrix}
    \nabla f(\chi)-A(\chi)^Ty\\
    -h(\chi)+\mu Y^{-1}e
  \end{pmatrix}.
\end{equation}
Now we use iteration to update the solutions by
\begin{align*}
  \chi^{(k+1)}&:=\chi^{(k)}+\alpha^{(k)}\Delta\chi^{(k)},\\
  s^{(k+1)}&:=s^{(k)}+\alpha^{(k)}\Delta s^{(k)},\\
  y^{(k+1)}&:=y^{(k)}+\alpha^{(k)}\Delta y^{(k)},
\end{align*} 
where $(\chi^{(0)},s^{(0)},y^{(0)})$ is the initial guess and
$\alpha^{(k)}$ is chosen to ensure both that $s^{(k+1)}>0$ and the
objective function
\begin{equation*}
  \Psi_{\upsilon,\mu}(\chi,s)=f_{\mu}(\chi,s)+\frac{\upsilon}{2}\|g(\chi)-s\|,
\end{equation*}  
is sufficiently reduced \cite{benson2000interior}. The parameter
$\upsilon$ may increase with the iteration number to force the
solution toward feasibility.

\subsection{The Optimal Multi-Level Monte-Carlo Finite-Element Method}
\label{s:optimal}

For an optimal multi-level Monte-Carlo finite-element method, our goal
is to determine the optimal hierarchies $(L, \left\lbrace M_{\ell}
\right\rbrace_{\ell=0}^L, h_0,r)$ which minimize the computational
work subject to the given accuracy constraint $\RMSE \le \varepsilon$.
The optimal number~$L$ of levels is also unknown a~priori. To this
end, we solve the optimization problem
\begin{equation}
  \begin{aligned}
    & \underset{M_\ell,h_0,r}{\text{minimize}}
    & & f(M_\ell, h_0,r,L) :=
    \sum_{\ell=0}^{L}\sum_{k=1}^N \mu_k M_\ell h_0^{-\gamma_k} r^{\ell\gamma_k}\\
    & \text{subject to}
    & & g_1(M_\ell,h_0,r,L):= \varepsilon- \frac{C_{00}}{\sqrt{M_0}}-C_0 \sum_{\ell=1}^{L}\frac{1}{\sqrt{M}_\ell} h_0^{\beta} r^{-(\ell-1)\beta} - C_1 h_0^{\alpha} r^{-L\alpha} \geq 0,\\
    &&&  g_2(M_\ell):= M_\ell  -1\geq0,\qquad \ell = 0, \ldots, L,\\
    &&&  g_3(h_0):= h_0 -\xi\geq0,\\
    &&& g_4(r):= r -1\geq0,
    \label{op:MLMC}
  \end{aligned}
\end{equation}
where $N\in\mathbb{N}$ and $\xi$ is the smallest positive normalized
floating-point number representable by the machine.

Similar to the vanilla Monte-Carlo case, we use the interior-point
method to solve this nonlinear problem and optimize the hierarchies.
In problems with two or three physical/spatial dimensions, the optimal
determination of the mesh sizes $h_\ell$ is a crucial factor in the
optimization problem specifically if the exponents $\gamma_k$ are
greater than~$1$.

There are two options: one is to choose the $h_\ell$ as a geometric
progression according to \eqref{h_ell}. In this case, we solve the
minimization problem \eqref{op:MLMC}. The other is to choose the mesh
sizes $h_\ell$ freely such that they only satisfy the natural
condition
\begin{equation*}
  h_0\geq h_1\geq h_2\geq \cdots \geq h_L.
\end{equation*} 
We will explore both options in Section \ref{s:implementation}.

In the second case, when the mesh sizes are freely chosen, we write
them as
\begin{equation*}
  h_\ell:=\frac{h_0}{r_\ell},\qquad  \ell=1,\ldots,L,
\end{equation*}
where $r_\ell:=\prod_{i=1}^{\ell}r_i$ with $r_i\ge 1$. It is clear
that $r_L\geq r_{L-1} \ldots\geq r_1\geq1$. Hence, in \eqref{op:MLMC},
we may replace~$r$ by $r_\ell$ and then the optimization problem can
be rewritten as
\begin{equation}
  \begin{aligned}
    & \underset{M_\ell,h_0,r_\ell}{\text{minimize}}
    & & f(M_\ell, h_0,r_\ell,L) :=
    \sum_{\ell=0}^{L}\sum_{k=1}^N \mu_k M_\ell h_0^{-\gamma_k} r_\ell^{\gamma_k}\\
    & \text{subject to}
    & & g_1(M_\ell,h_0,r_\ell,L):= \varepsilon- \frac{C_{00}}{\sqrt{M_0}}-C_0 \sum_{\ell=1}^{L}\frac{1}{\sqrt{M}_\ell} h_0^{\beta} r_{\ell-1}^{-\beta} - C_1 h_0^{\alpha} r_L^{-\alpha} \geq 0,\\
    &&&  g_2(M_\ell):= M_\ell  -1\geq0,\qquad \ell = 0, \ldots, L,\\
    &&&  g_3(h_0):= h_0 -\xi\geq0,\\
    &&& g_4:= r_i -1\geq0,\quad 1\le i\le \ell \quad \text{and} \quad \ell=1,\ldots,L.
    \label{op:MLMC2}
  \end{aligned}
\end{equation}

In the next section, we apply these two approaches to a MLMC FE method
and discuss their efficiency.
 
\section{Numerical Results}\label{s:implementation}

In this section, we present numerical results for the Monte-Carlo and
multi-level Monte-Carlo methods for the drift-diffusion-Poisson
system. We also investigate the choices of the FE mesh sizes on each
level, namely as geometric progressions or freely chosen. The random
coefficients in the drift-diffusion-Poisson system considered here
stem from a real-world application, namely the effect of random
dopants in nanoscale semiconductor devices, which is of great
importance in its own right.

\subsection{The Leading Example}

Random-dopant effects are called discrete-dopant fluctuation effects
\cite{van2004principles, roy2005dopants, khodadadian2015basis}. In
nanoscale semiconductor devices, the charge profile of the dopant
atoms cannot be validly modeled as a continuum anymore, but the random
location of each dopant needs to be taken into account. This means
that each device is a realization of a random process and corresponds
to an event~$\omega$. In this manner, the potential and
carrier-density fluctuations due to the discreteness and randomness of
the dopants are clearly captured.


Here the silicon lattice is doped with boron as the impurity atoms.
The domain $D\subset\mathbb{R}^2$ is depicted in
Figure~\ref{fig:mesh}. The thickness of the oxide layer is
$\unit{8}{nm}$, the thickness of the nanowire is $\unit{50}{nm}$, and
its width is $\unit{60}{nm}$. Regarding the geometry, Dirichlet
boundary conditions are used at the contacts with a back-gate voltage
of $\unit{-1}{V}$ (at the bottom of the device) and an electrode
voltage of $\unit{0}{V}$ (at the top of the device). Zero Neumann
boundary conditions are used everywhere else.
The relative permittivities in the
subdomains are $A_\mathrm{Si} = 11.7$, $A_\mathrm{ox}=3.9$,
$A_\mathrm{liq}=78$, and $A_\mathrm{dop}=4.2$. The number of dopants
placed randomly in the device corresponds to a  doping
concentration of  $\unit{10^{16}}{cm^{-3}}$.
According to its volume, the silicon subdomain hence contains $6$
negative impurity atoms when $\Cdop = \unit{5\cdot10^{15}}{cm^{-3}}$
and $600$ dopants when $\Cdop = \unit{5\cdot10^{17}}{cm^{-3}}$.

In order to solve the system of equations, we use Scharfetter-Gummel
iteration.
In spite of the quadratic convergence of Newton's method for the
system, Scharfetter-Gummel iteration has advantages for the problem at
hand. First of all, Scharfetter-Gummel iteration is much less
sensitive to the choice of the initial guess than Newton's method.
Another important feature is the reduced computational effort and
memory requirement, since in each iteration it requires the successive
solution of three much smaller elliptic problems.

\begin{figure}[ht!]
  \centering
  \subfloat{\includegraphics[width=0.5\linewidth]{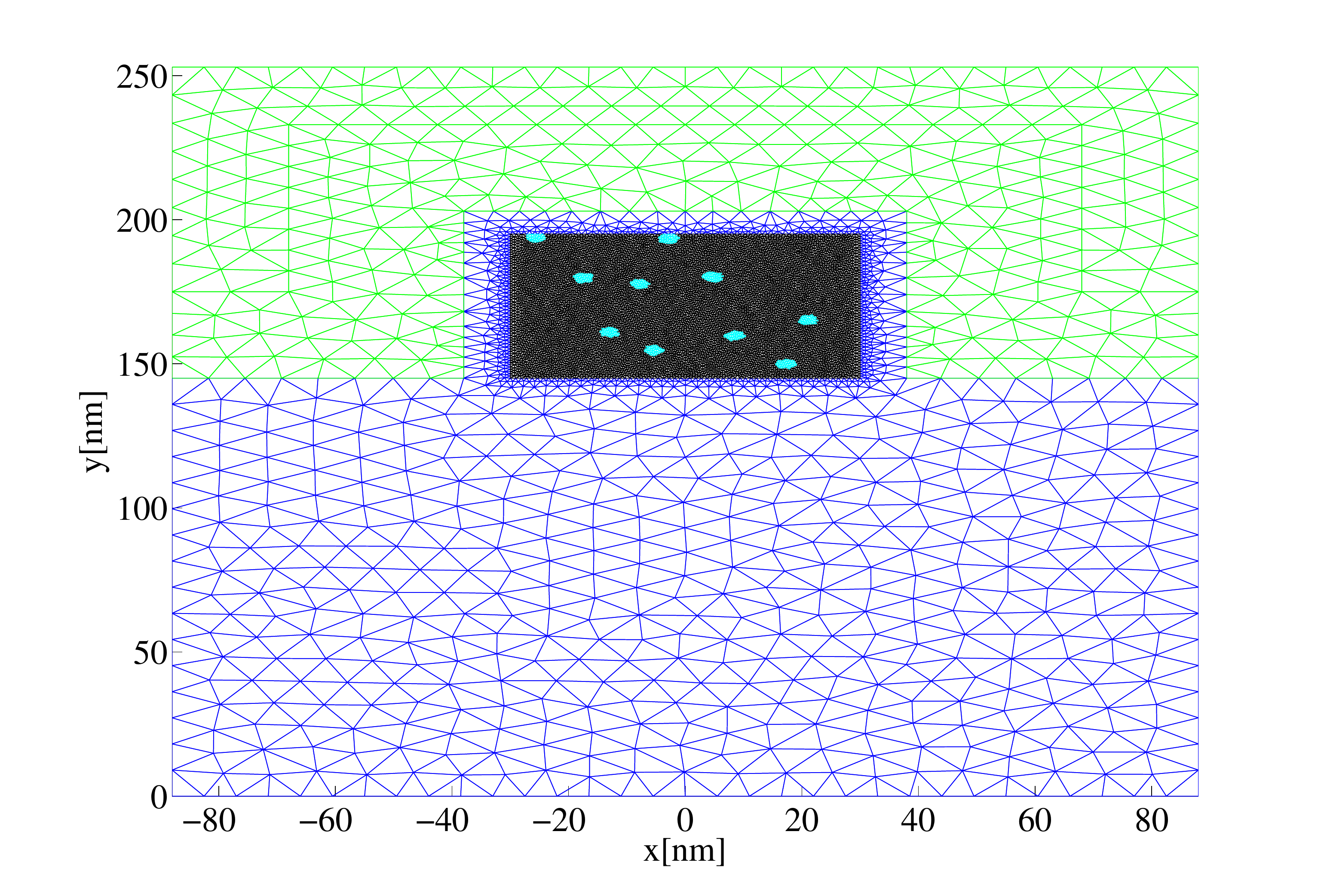}}
  \hfill    
  \subfloat{\includegraphics[width=0.5\linewidth]{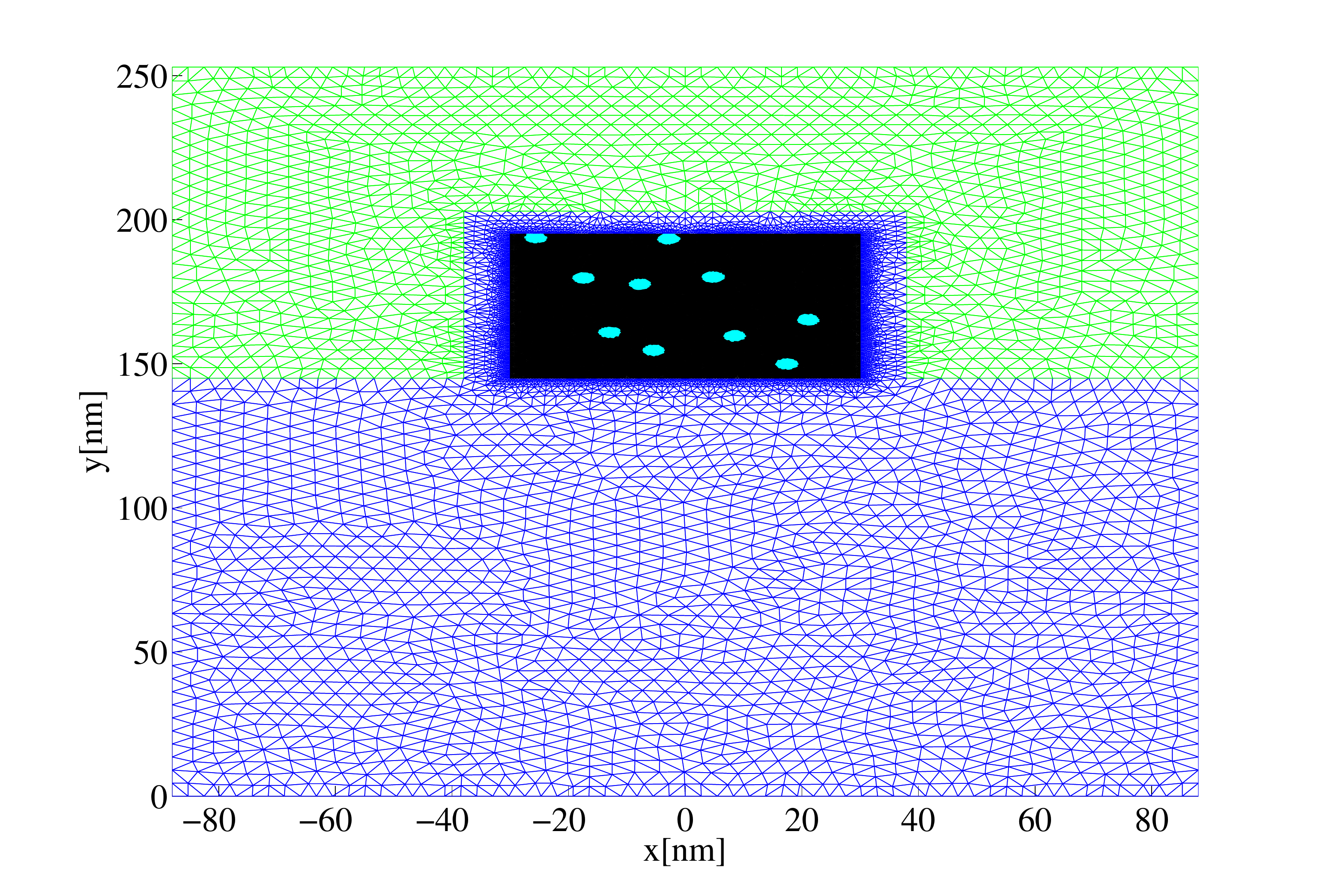}}
  \caption{Meshes for the random distribution of impurity atoms (cyan
    circles) in a nanowire field-effect sensor for levels $\ell=0$
    (left) and $\ell=1$ (right), where $h_0=5$, $r=2$, and $\Cdop =
    \unit{2\cdot10^{17}}{cm^{-3}}$. Additionally, oxide subdomain ($D_{\text{ox}}$), trandsucer ($D_{\text{Si}}$) and the electrolyte ($D_{\text{liq}}$) are depicted with blue, balck and green meshes respectively. }
  \label{fig:mesh}
\end{figure}
  
\begin{figure}[ht!]
  \centering
  \subfloat{\includegraphics[width=0.5\linewidth]{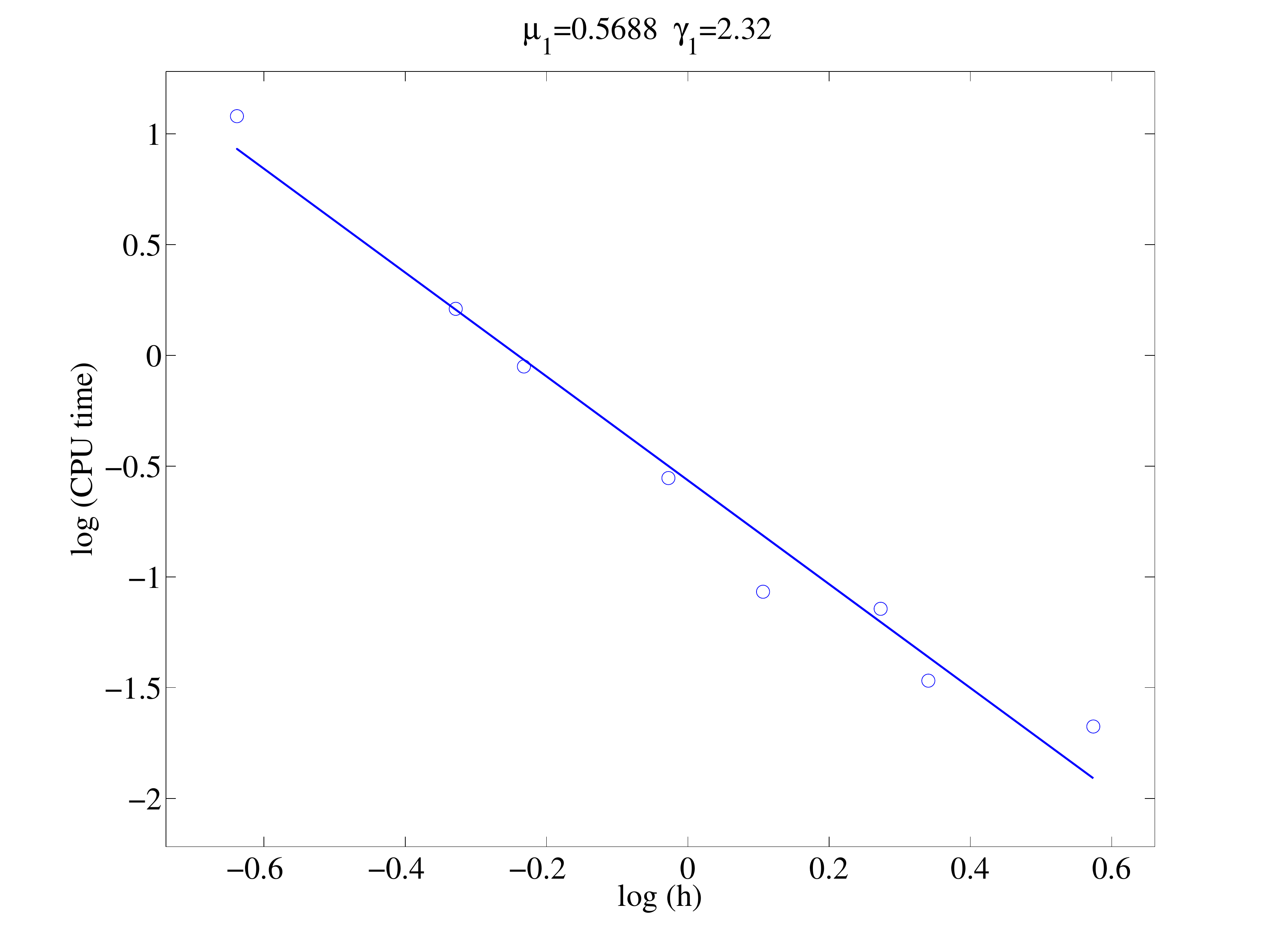}}
  \hfill
  \subfloat{\includegraphics[width=0.5\linewidth]{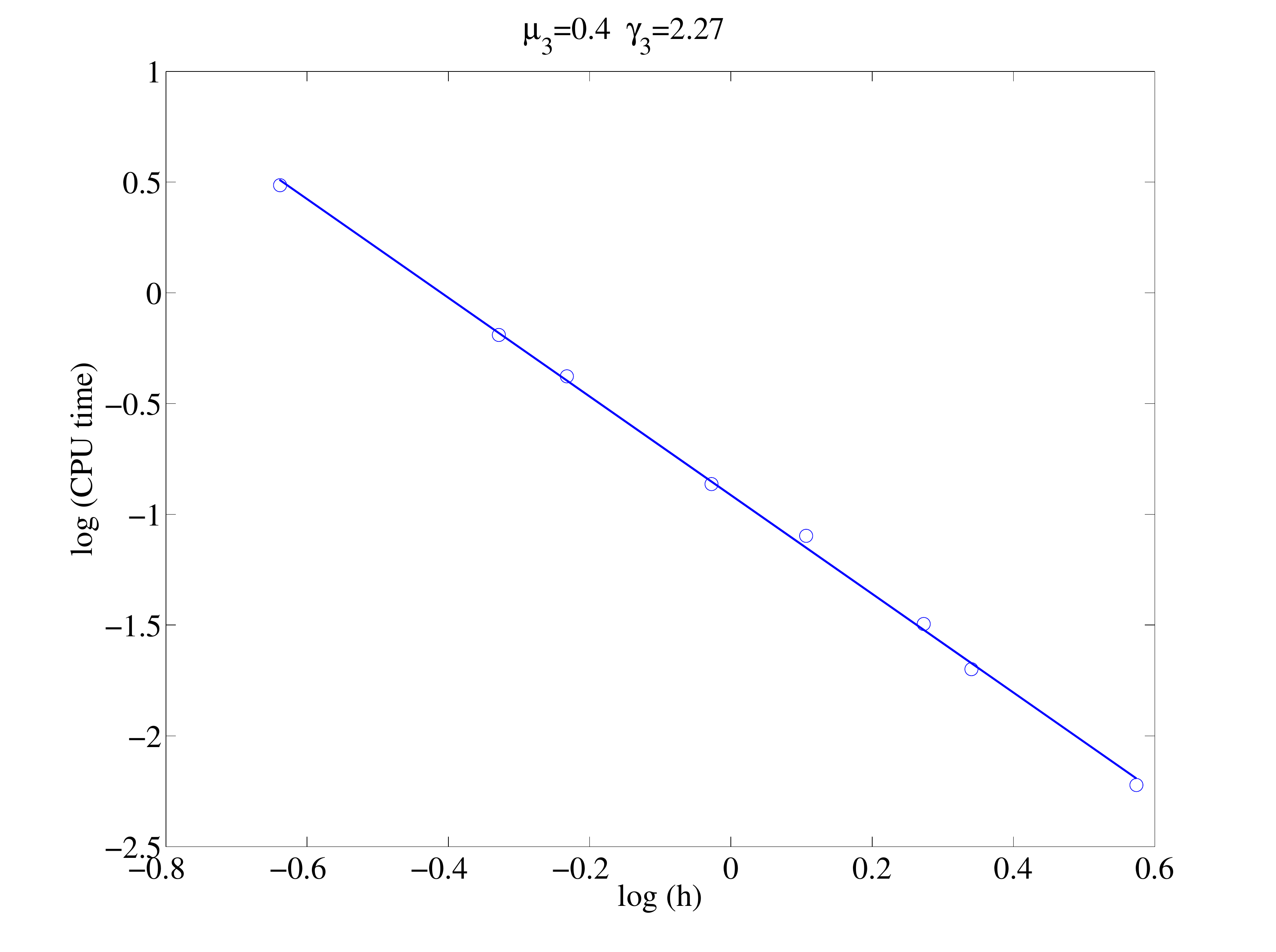}}
  \newline 
  \subfloat{\includegraphics[width=0.5\linewidth]{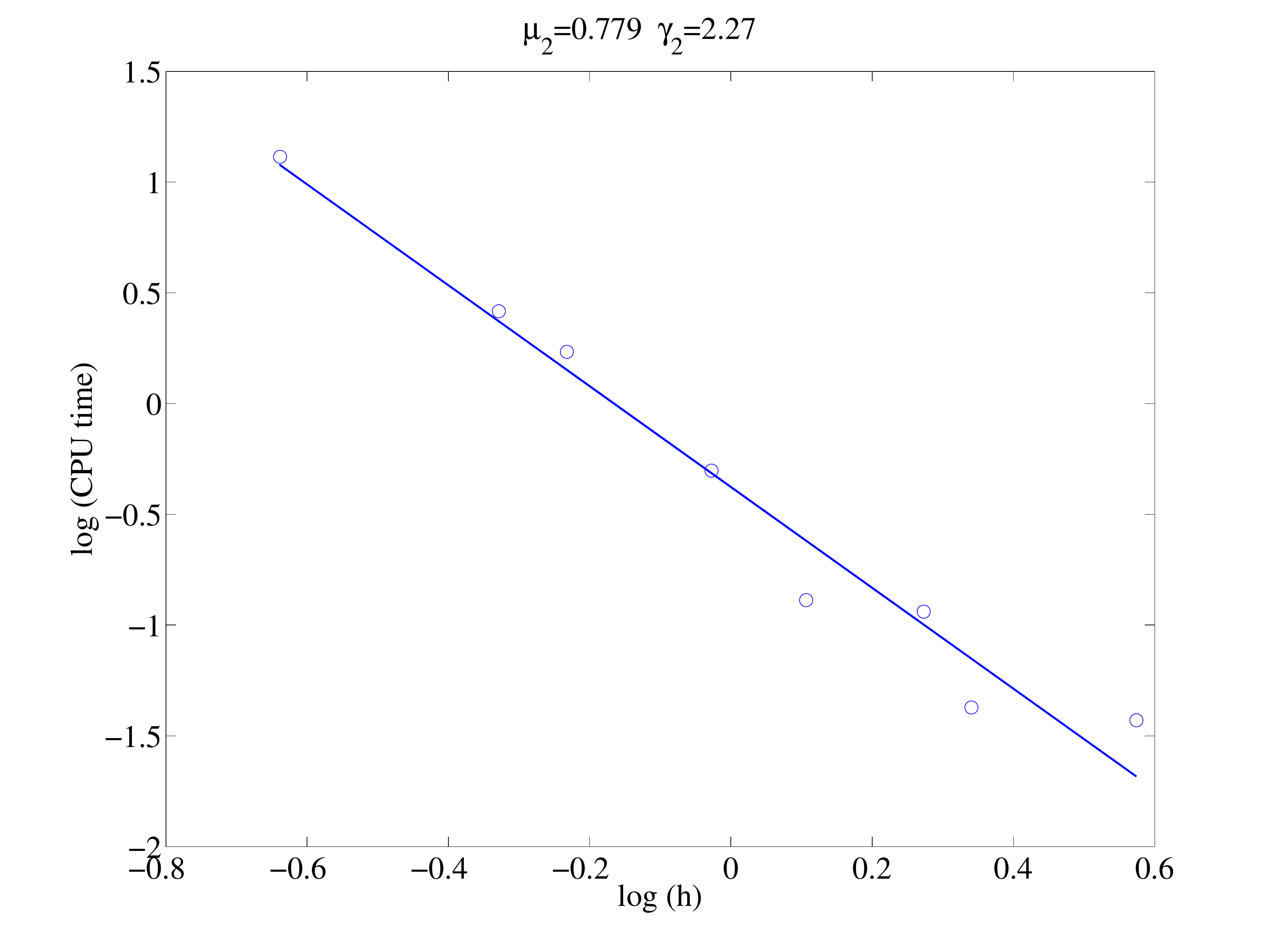}}
  \hfill
  \subfloat{\includegraphics[width=0.5\linewidth]{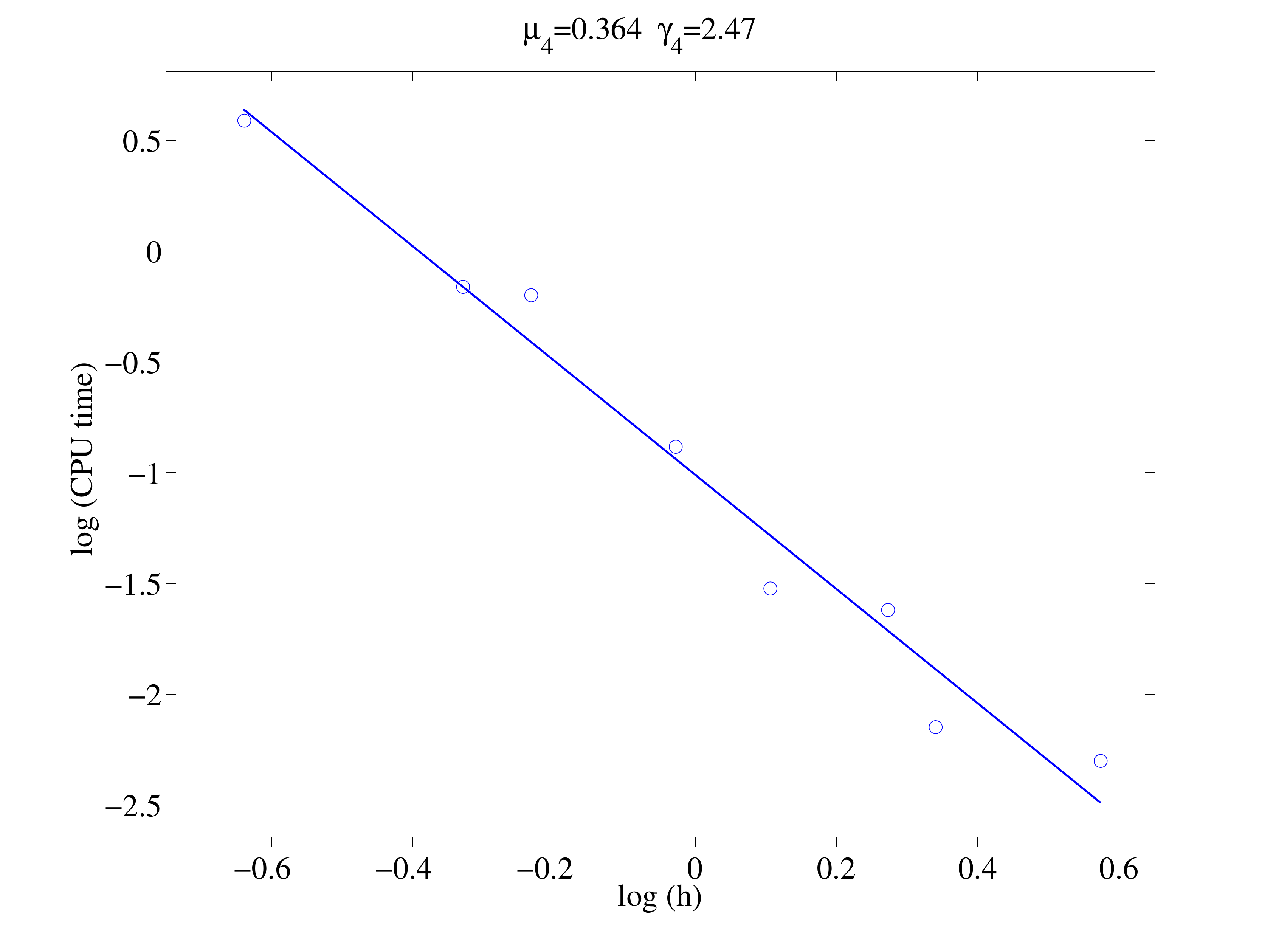}}
  \caption{Computational work for matrix assembly (top) and solving
    the system (bottom), both for the Poisson equation (left) and the
    drift-diffusion-equations (right).}
  \label{fig:work_MC}
\end{figure} 


\subsection{The Computational Work}

As the first step, we calculate the coefficients in the expressions
\eqref{e:work} for the computational work. To that end, we  solve the system for various mesh sizes and
measure the time spent on matrix assembly and solving the resulting
system, both for the Poisson equation and the drift-diffusion
equations. Figure~\ref{fig:work_MC} shows the results for the
coefficients in the expressions for the computational work.


The coefficients~$\alpha$ and~$C_1$ in the FE discretization error
\begin{equation*}
  \|\mathbb{E}[V-\hat{V}_h  ]\|_X  +  \|\mathbb{E}[u-\hat{u}_h  ]\|_X
  + \|\mathbb{E}[v-\hat{v}_h  ]\|_X\leq C_1h^\alpha
\end{equation*}
of the system are given in Figure~\ref{fig:MLMC:aplha}.  The exponent
$\alpha=0.96$ found here agrees very well with the order of the
discretization used here, i.e., $P_1$ finite elements.

For the statistical error, we determine the coefficients in the
inequality
\begin{equation*}
  (\sigma[\Delta V_{h_0}]+ \sigma[\Delta V_{h_\ell}])
  + (\sigma[\Delta u_{h_0}]+ \sigma[\Delta u_{h_\ell}])
  + (\sigma[\Delta v_{h_0}]+ \sigma[\Delta v_{h_\ell}])
  \le C_{00}+C_0 h_{\ell-1}^{\beta}.
\end{equation*}
Here $C_{00}=0.197$ and the rest of the coefficients are shown in
Figure~\ref{fig:MLMC:aplha}.

\begin{figure}[ht!]
  \centering
  \subfloat{\includegraphics[width=0.5\linewidth]{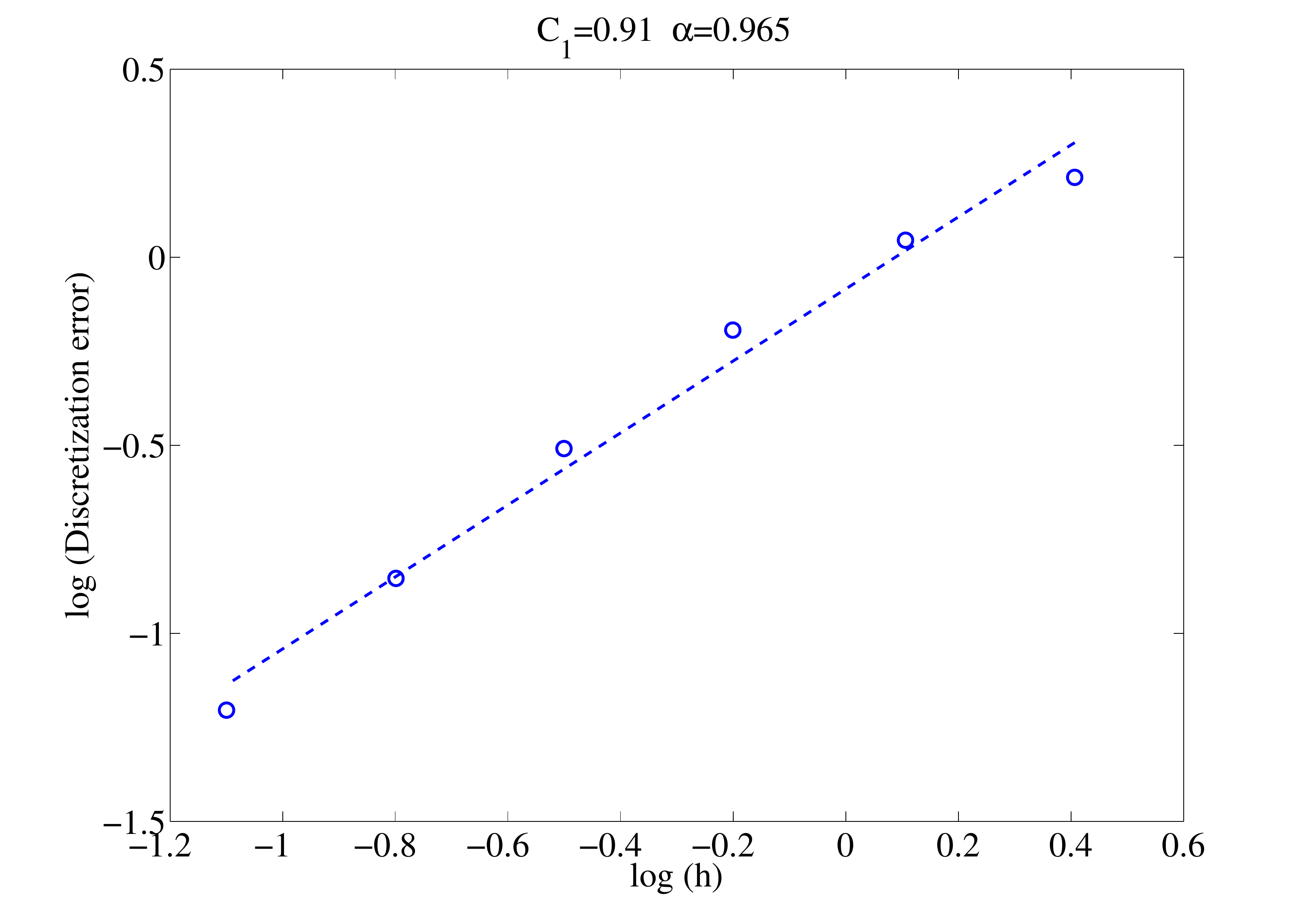}}
  \hfill    
  \subfloat{\includegraphics[width=0.5\linewidth]{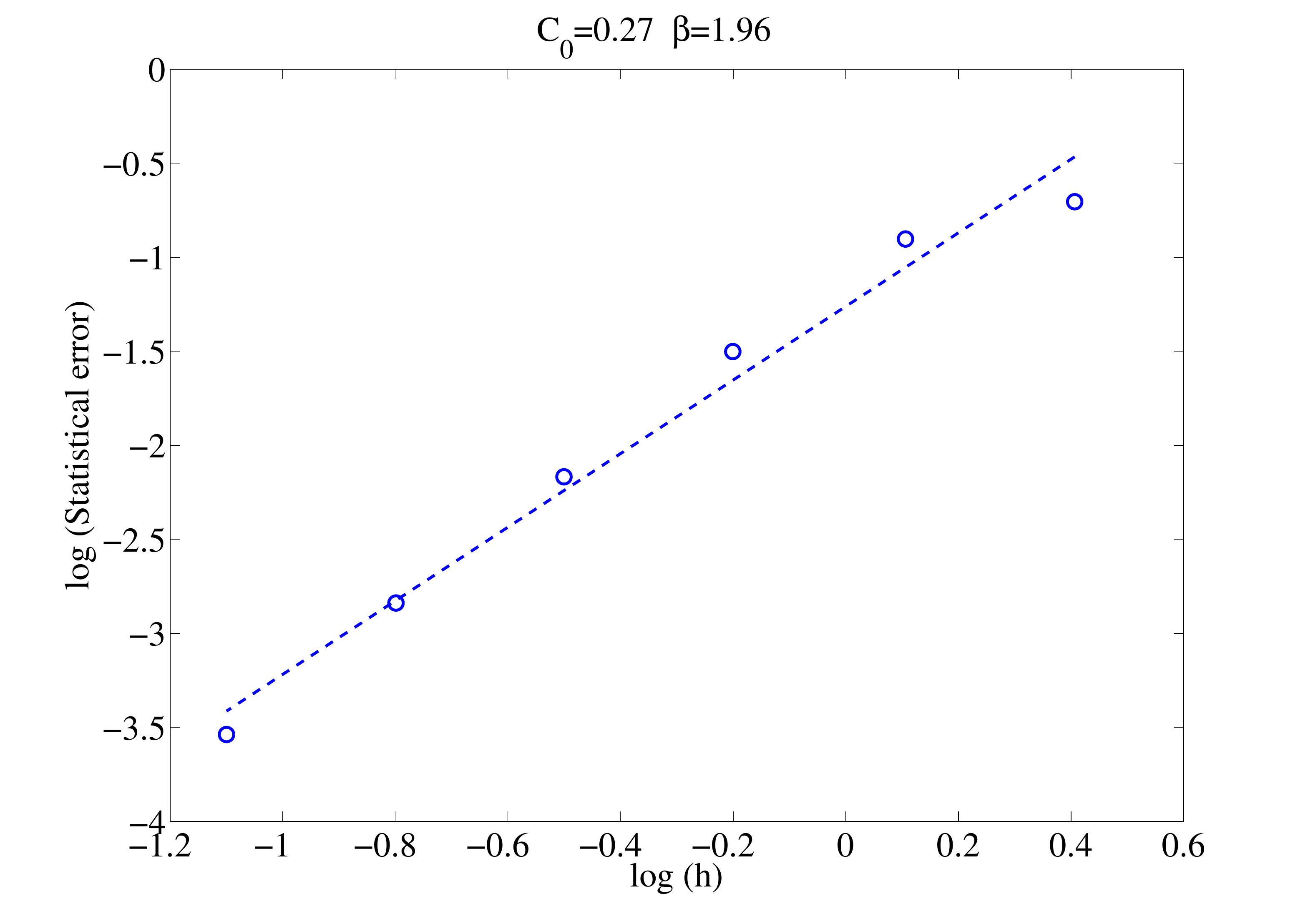}}
  \caption{Discretization error (left) and statistical (right) error
    as a function of~$h$.}
  \label{fig:MLMC:aplha}
\end{figure}

\subsection{Optimization}

Having determined the coefficients in the expressions for the
computational work, it is now possible to numerically solve the
optimization problems. As described in Section~\ref{s:complexity}, we
apply an iterative interior-point method to optimize both the number
of samples and mesh sizes. The results here are obtained for
$\xi=2^{-52}$.

\subsubsection{Monte Carlo}

First of all, we solve the optimization problem \eqref{op:MC} for the
MC-FE method. Because there is only one level, it is straightforward
to solve. The optimal values for the MC FE method are summarized in
Table~\ref{table:3} for given $\varepsilon$.

\begin{table}[ht!]
  \centering
  \begin{tabular}{rrrrrrrr}
    $\varepsilon$ & 0.1  & 0.05 & 0.03 & 0.02 & 0.01 & 0.005 &0.002\\
    \hline
    $h$ & 0.054 & 0.026  & 0.015 & 0.010& 0.005 & 0.002 &  0.001\\
    $M$ & 19 & 77   & 214 & 483 & 1940 & 7785 & 48\,845\\
  \end{tabular}
  \caption{Optimal MC FE method parameters for various given error
    tolerances.}
  \label{table:3}
\end{table} 

\subsubsection{Multi-Level Monte Carlo}

In the MLMC-FE method, determining the optimal number of levels is an
important part of the calculation. This is achieved here by solving
the optimization problem for several levels starting with a single
level and noting that the computational work increases above a certain
number of levels. More precisely, we solve the optimization problem
\eqref{op:MLMC} for $0\leq L\leq 8$ levels as well as for various
given error bounds.

Since the number of samples in each level is a continuous variable in
the optimization problem, the optimal number of levels is -- in
general -- not an integer and hence we choose $\floor{M_{\ell}}$,
$\ell = 0, \ldots, L$, as the final numbers of levels. Therefore, due
to the second constraint $g_2$, $M_\ell$ is an integer greater equal~$1$
for all levels.

\begin{figure}[ht!]
  \centering
  \subfloat{\includegraphics[width=0.5\linewidth]{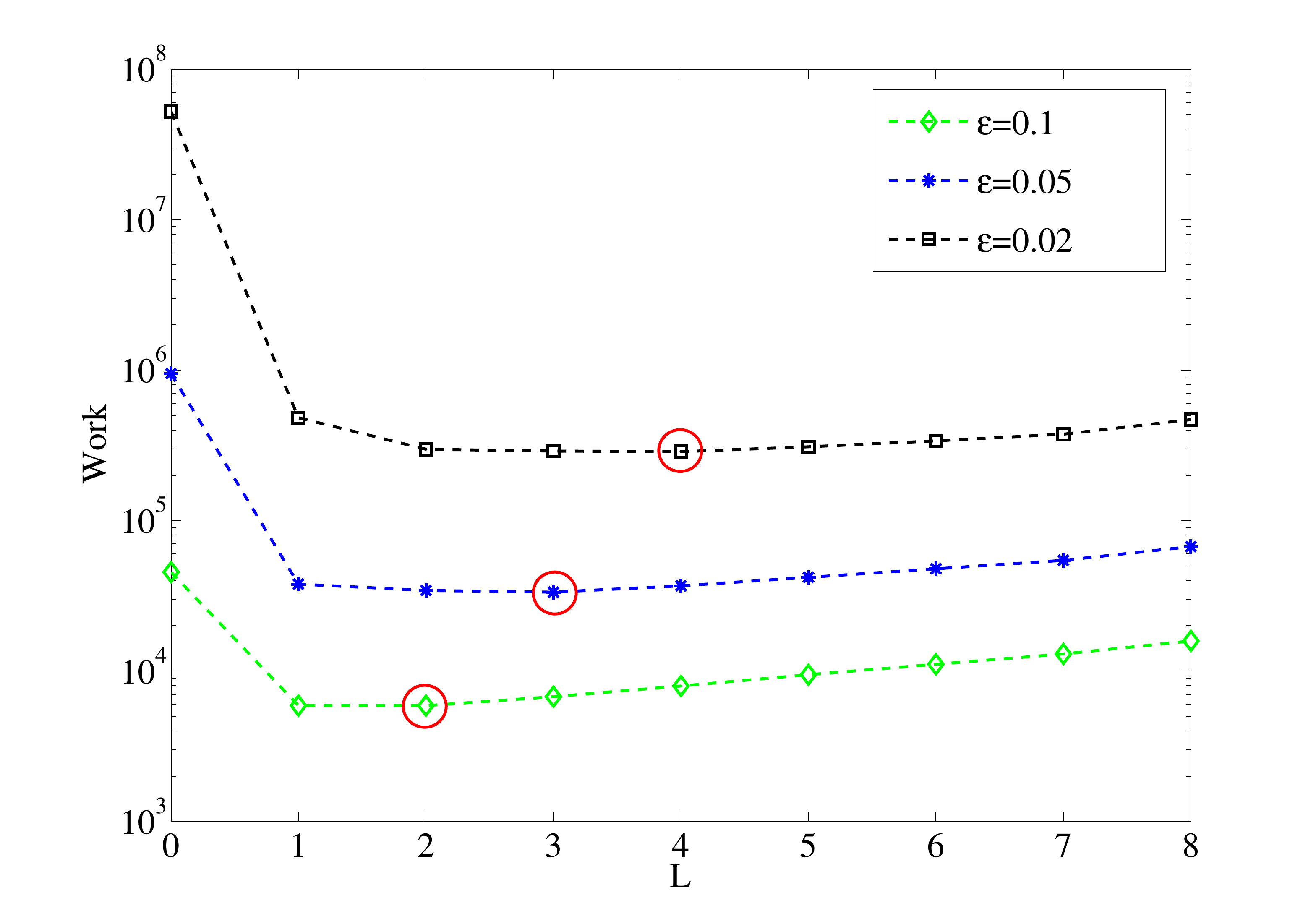}}
  \hfill    
  \subfloat{\includegraphics[width=0.5\linewidth]{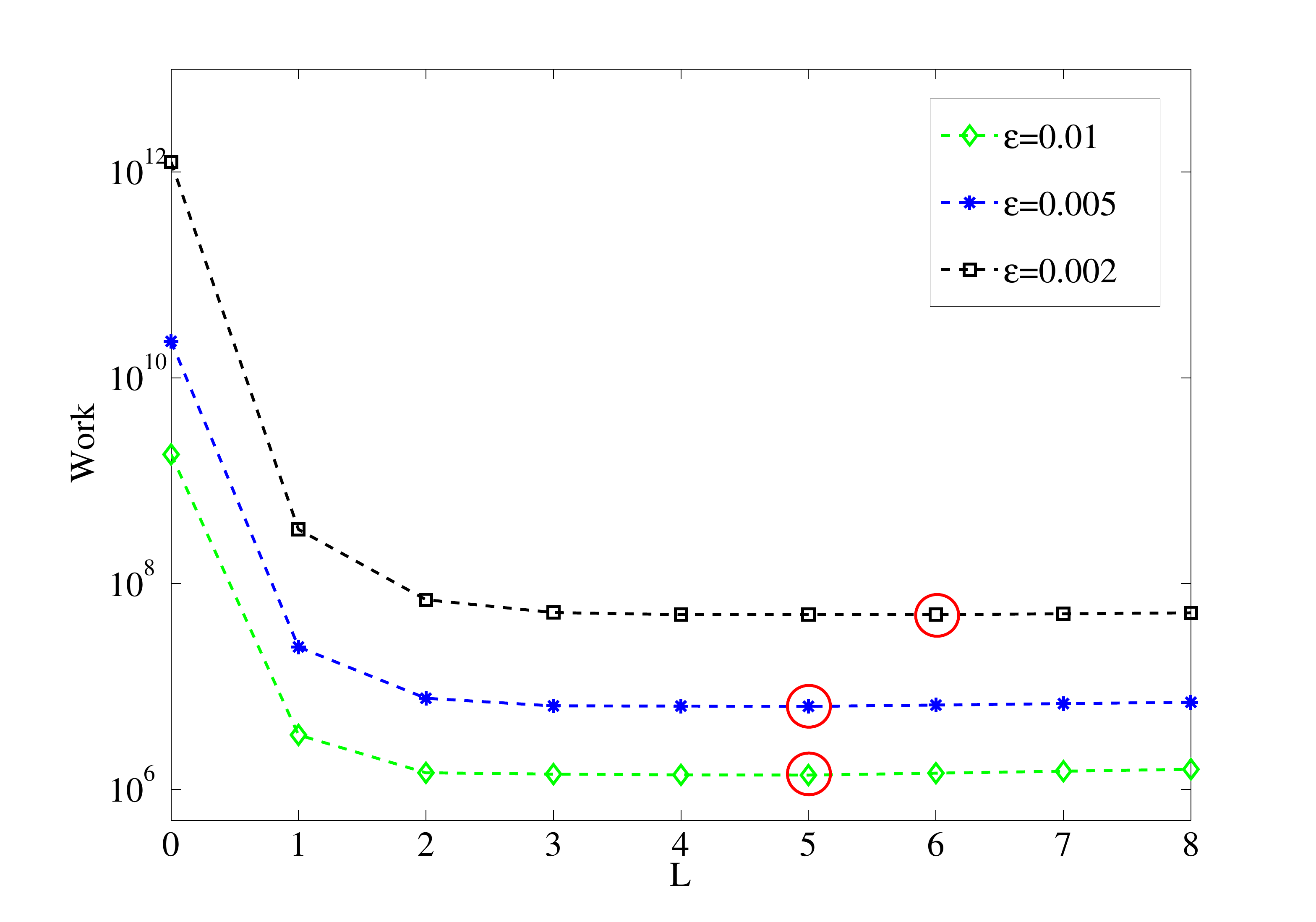}}
  \caption{The minimized computational work for the MLMC-FE method as
    a function of the number of levels and as a function of the given
    error tolerance. The results for a geometric progression for~$h$
    (left) and general~$h$ (right) are shown. The number of levels
    yielding the minimal overall computational work is indicated by
    red circles.}
  \label{fig:MLM_comp1}
\end{figure}

\begin{figure}[ht!]
  \centering
  \includegraphics[width=0.6\linewidth]{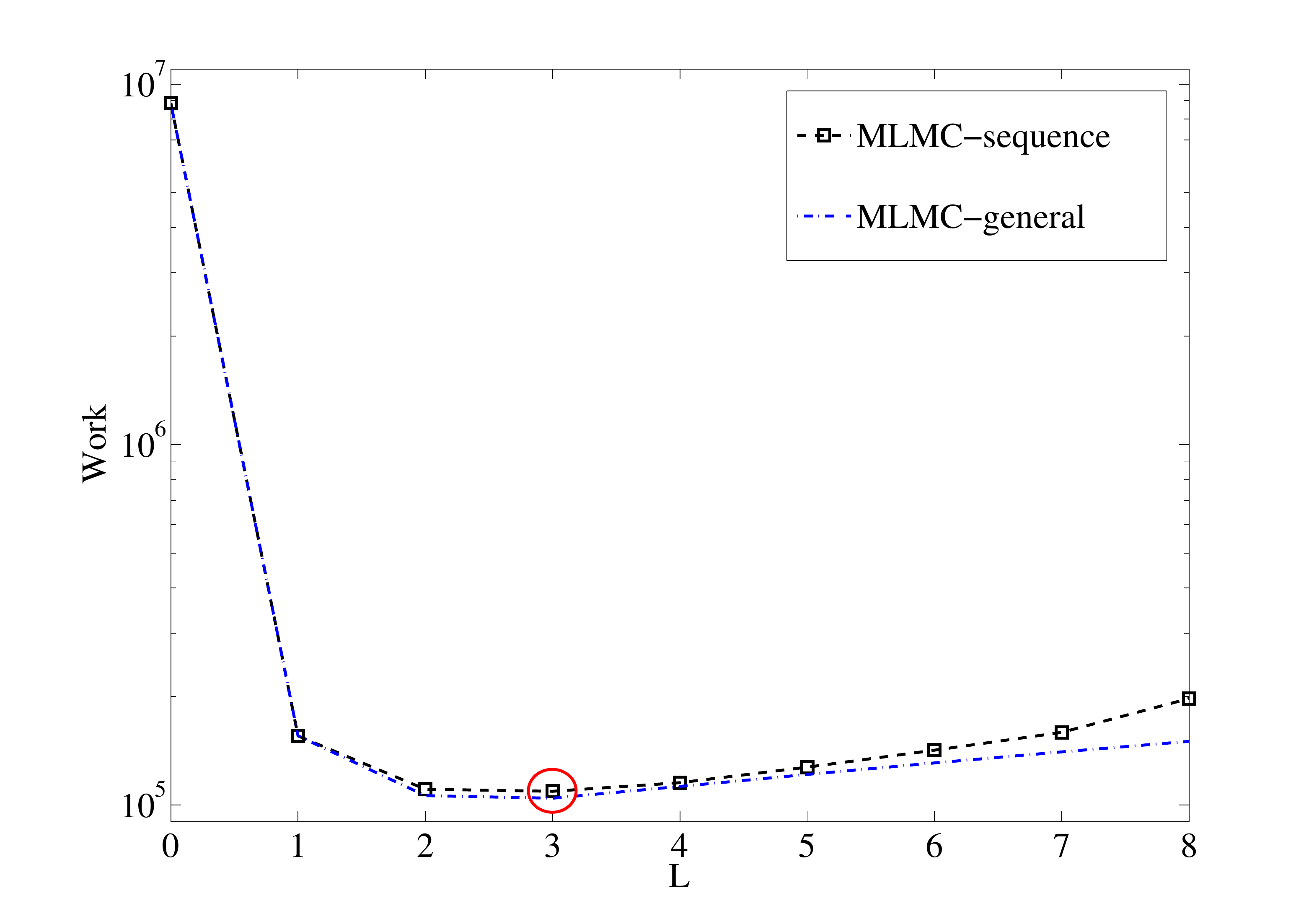}
  \caption{Comparison between the two different approaches to MLMC FE method
    for $\varepsilon=0.03$.}
  \label{fig:MLMC_comp3}
\end{figure}

The results of the optimization problems provide insight into the MLMC
procedure. Figure~\ref{fig:MLM_comp1} shows the minimized
computational work as a function of the number of levels and as a
function of the given tolerance. It shows that for smaller tolerances,
more levels are necessary to obtain the smallest computational work.

In Figure~\ref{fig:MLMC_comp3}, the two approaches to multi-level
Monte Carlo are compared, namely choosing the $h_\ell$ as a geometric
progressions or freely. Due to generality of the second option, the
total work when choosing the $h_\ell$ freely is lower compared to the
first option. The results for both approaches to MLMC-FEM are
summarized in Tables~\ref{table:1} and~\ref{table:2} for various given
error tolerances.

\begin{table}[ht!]
  \centering
  \begin{tabular}{*{3}l*{7}r}
    $\varepsilon$& $h_0$ & $r$  & $M_0$ & $M_1$& $M_2$ & $M_3$ & $M_4$&$M_5$ &$M_6$\\
    \hline
    $0.1$ & 0.359 & 2.650  & 59 & 4 & 1 & -- & --&--&--  \\
    $0.05$   & 0.350& 2.430   & 276 & 22 & 2 & 1 & --& --& --\\
    $0.03$     & 0.329 & 2.830 & 779 & 46 & 2 & 1 & --& --&-- \\
    $0.02$     & 0.339 & 2.440 & 1\,993 & 152 & 12 & 1 & 1& --&-- \\
    $0.01$     & 0.347 & 2.355 & 9\,428 & 786 & 69 & 6 & 1& 1&-- \\
    $0.005$     & 0.332 & 2.670 & 39\,142 & 2\,526 & 154 & 9 & 1& 1&-- \\
    $0.002$     & 0.334 & 2.647 & 286\,181 & 18\,986 & 1200 & 76 & 6& 1&1\\
  \end{tabular}
  \caption{Optimal levels for the MLMC-FE method with~$h_\ell$ chosen
    as a geometric progression for given error
    tolerances~$\varepsilon$.}
  \label{table:1}
\end{table} 

\begin{table}[ht!]
  \centering
  \begin{tabular}{*{9}l}
    $\varepsilon$ & $h_0$ & $r_1$ & $r_2$ & $r_3$ & $r_4$& $r_5$& $r_6$\\
    \hline
    $0.1$    & 0.366 & 2.100 & 3.490 &-- & --&-- &--\\
    $0.05$   & 0.360 & 2.168 & 1.860 & 3.712& --&--& --\\
    $0.03$   & 0.343 & 2.351 & 2.138 & 4.750&-- &-- &--\\
    $0.02$   & 0.346 & 2.317 & 2.086 & 1.744& 4.348 &--&--   \\
    $0.01$   & 0.339 & 2.400 & 2.218 & 1.938& 1.536 &4.554&--   \\
    $0.005$  & 0.332 & 2.483 & 2.348 & 2.134& 1.812 &6.220&--   \\
    $0.002$  & 0.328 & 2.541 & 2.442 & 2.280& 2.030 &1.670&7.142  \\
  \end{tabular}
  \bigbreak
  \begin{tabular}{l*{7}r}
    $\varepsilon$  &$M_0 $ & $M_1$ & $M_2$& $M_3$ & $M_4$ & $M_5$& $M_6$\\
    \hline
    $0.1$   & 59 & 6 & 1 & -- & --&--&--  \\
    $0.05$  & 280 & 28 & 4 & 1 & --& --&-- \\
    $0.03$    & 800 &66 & 7 & 1 & --& --&-- \\
    $0.02$     & 2\,012 &171 & 19 & 3 & 1&--&--  \\
    $0.01$    & 8\,897 &697 & 66 & 8 & 2&1&--  \\
    $0.005$    & 38\,251 &2\,778 & 229 & 24 & 4&1&--  \\
    $0.002$     & 271\,031 &18\,667 & 1408 & 124 & 14&3&1  \\
  \end{tabular}
  \caption{Optimal levels for the MLMC-FE method with general~$h_\ell$ for
    given error tolerances~$\varepsilon$.}
  \label{table:2}
\end{table} 
      
\subsubsection{Comparison}

Finally, as Figure~\ref{fig:MLMC_comp2} shows, the computational work
for the multi-level Monte-Carlo method is approximately 10 times lower
than the one for the Monte-Carlo method for larger tolerance levels
such as $\varepsilon=0.1$. The effectiveness of the MLMC-FE method is
more pronounced for smaller error bounds; for $\varepsilon=0.005$, the
computational work is about a factor $10^4$ lower than the Monte-Carlo
work.

The optimal distribution of the samples among the
levels in the multi-level method leads to more evaluations in the
first levels (which are cheaper) and to fewer evaluations in the
higher levels. On the other hand, to satisfy the first constraint of
\eqref{op:MC}, the Monte-Carlo method needs a smaller mesh size
compared to the multi-level method, which greatly increases the total
computational work although the total number of samples is lower.

\begin{figure}[ht!]
  \centering
  \includegraphics[width=0.7\linewidth]{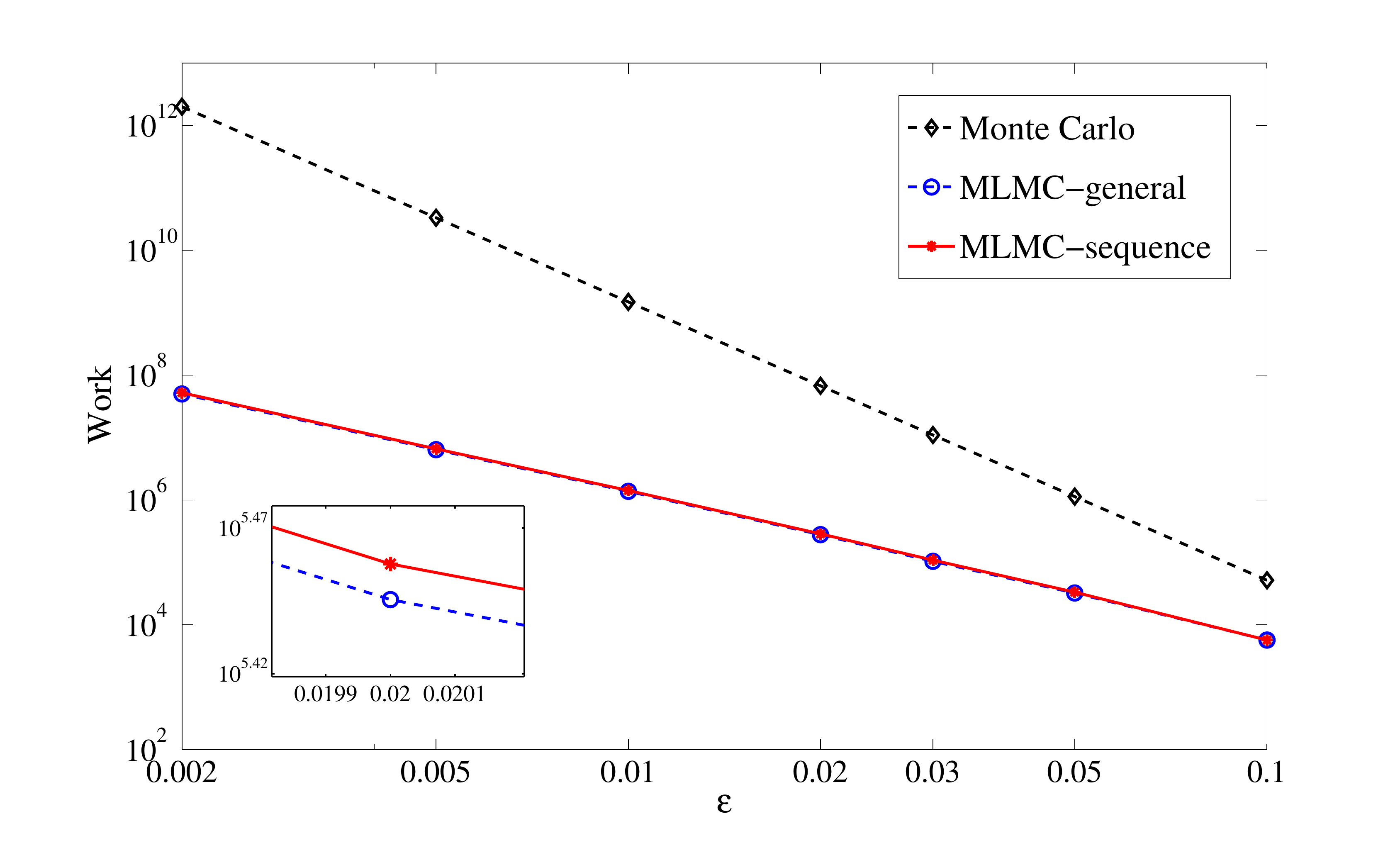}
  \caption{Comparison of total computational work for MC-FEM and the
    two approaches to MLMC-FEM for various given tolerances.}
  \label{fig:MLMC_comp2}
\end{figure}
   
\section{Conclusions}\label{conclusions}

In this work, we considered the stochastic drift-diffusion-Poisson
equations as the main model equation for describing transport in
random environments with many applications. We presented existence and
local uniqueness theorems for the weak solution of the system. We also
developed MC- and MLMC-FE methods for this system of stochastic PDEs.
      
Additionally, we balanced the various parameters in the numerical
methods by viewing this problem as a global optimization problem. The
goal is to determine the numerical parameters such that the
computational work to achieve a total error, i.e., discretization
error plus statistical error, less than or equal to a given error
tolerance is minimized.

Although the exponential terms in the constraints make the
optimization problems nonlinear, the optimization problems can be
solved by an interior-point method with sufficient iterations. The
solution of the constrained optimization problem leads to optimal
($M$, $h$) in the case of the vanilla MC method and to hierarchies
consisting of $(L, \left\lbrace M_{\ell} \right\rbrace_{\ell=0}^L,
h_0,r)$ in the case of the MLMC method.

Moreover, we investigated two different options to the mesh refinement
in the multi-level method. In the comparison of the MC with the MLMC
method, the MLMC method was found to decrease the total computational
effort by four orders of magnitude for small error tolerances. The
speed-up becomes better as the error tolerance decreases.
   
\section{Acknowledgments}
     
The authors acknowledge support by FWF (Austrian Science Fund) START
project no.\ Y660 \textit{PDE Models for Nanotechnology}. The authors
acknowledge discussions with Dr.\ Masoud Ahookhosh (University of
Luxembourg) about numerical optimization methods.

 \bibliographystyle{elsarticle-num}
 \bibliography{MLMC}



\end{document}
